\newtheorem{theorem}{Theorem}[section]
\newtheorem{corollary}{Corollary}[theorem]
\newtheorem{lemma}[theorem]{Lemma}
\newtheorem{remark}{Remark}[theorem]
\theoremstyle{definition}
\newtheorem*{definition}{Definition}
\def\NN{{\mathbb N}}
\def\hats{{\sc Hats\ }}
\def\HG{{\mathcal G}}
\def\gplus#1{\mathop{+\!\!_{_{#1}}}}
\def\gtimes#1{\mathop{\times\!\!_{_{#1}}}}
\def\hgn{\text{\rm HG}}
\def\cnst#1{\star #1}
\begin{document}

\begin{frontmatter}

\journal{Discrete Mathematics}

\title{The Hats game. On maximum degree and diameter.}
\author[inst1,inst2]{Aleksei Latyshev}
\ead{aleksei.s.latyshev@gmail.com}
\address[inst1]{ITMO University, St. Petersburg, Russia}
\address[inst2]{Leonard Euler International Institute at St. Petersburg, St.Petersburg, Russia}

\author[inst3]{Konstantin Kokhas}
\ead{kpk@arbital.ru}
\address[inst3]{St. Petersburg State University, St. Petersburg, Russia}

\begin{abstract}
    We analyze the following version of the deterministic \hats game. We have a graph $G$, and a sage resides at each vertex of $G$. When the game starts, an adversary puts on the head of each sage a hat of a color arbitrarily chosen from a set of $k$ possible colors. Each sage can see the hat colors of his neighbors but not his own hat color. 
    All of sages are asked to guess their own hat colors simultaneously, according to a 
    predetermined guessing strategy and the hat colors they see, where no communication between them is allowed. The strategy is winning if it guarantees at least one correct individual guess for every assignment of colors. Given a graph $G$, its hat guessing number $\hgn(G)$ is the maximal number $k$ such that there exists a winning strategy. 
    
    We disprove the hypothesis that $\hgn(G) \le \Delta + 1$ and demonstrate that diameter of graph and $\hgn(G$) are independent. 
\end{abstract}
\begin{keyword}
graphs \sep deterministic strategy \sep hat guessing game \sep hat guessing number
\end{keyword}

\end{frontmatter}

\section{Introduction}

The \hats game goes back to an old popular Olympiad problem. Its generalization to arbitrary graphs attracted the interests of mathematicians recently 
(see, e.g. %
\cite{bosek19_hat_chrom_number_graph, he20_hat_guess_books_windm,   alon2020hat}).

In this paper, we consider a variant of a hat guessing game. We have a graph $G$, and a sage resides at each vertex of $G$. There is some adversary who plays against the sages. When the game starts, the adversary puts on the head of each sage a hat of a color that he chooses from a set of $k$ possible colors. Each sage can see the hat colors of his neighbors but not his own hat color. 
All of sages are asked to guess their own hat colors simultaneously, according to their predetermined guessing strategy and the hat colors they see, where no communications between them is allowed. 
The sages act as a team, they can only discuss their strategy before the game begins.
The strategy is winning if it guarantees at least one correct individual guess for every assignment of colors. Given a graph $G$, its hat guessing number $\hgn(G)$ is the maximal number $k$ such that there exists a winning strategy.

The maximum number $k$, for which the sages can
guarantee the win, is called the \emph{hat guessing number} of graph $G$ and denoted
$\hgn(G)$. Computation of the hat guessing number for an arbitrary graph is a hard
problem. Currently it is solved only for few classes of graphs: for complete graphs,
trees (folklore), cycles~\cite{cycle_hats}, and pseudotrees~\cite{Kokhas2018}. Also, there are some results for ``books'' and ``windmills'' graphs, see~\cite{he20_hat_guess_books_windm}, \cite{kokhas_hats_2021}.
For bipartite, multipartite and $d$-degenerate graphs some estimations of hat guessing numbers are obtained by Alon et al.~\cite{alon2020hat}, Gadouleau and Georgiou \cite{Gadouleau2015}, He and Li \cite{he_Li_20_hat_guess_dgnr_graphs}.

M. Farnik \cite{Farnik2015} considered relation of the hat guessing number and the maximal degree of a graph. Using Lov\'asz Local Lemma he proved that  $\hgn(G)< e \Delta(G)$ for any graph $G$. 
He proposed hypothesis that the stronger inequality $\hgn(G)\le \Delta + 1$ holds.
This hypothesis is also presented in \cite{bosek19_hat_chrom_number_graph} and~\cite{he20_hat_guess_books_windm}. We disprove this hypothesis in section~4.

In our previous works~\cite{kokhas_cliques_I_2021}
and~\cite{kokhas_cliques_II_2021} (joint with V.\,Retinsky), we considered the
version of the hats game with variable number of hats (i.e., the number of possible colors 
is a function on graph vertices). This version of game is not only of its own
interest, but gives more flexible approach to the classic \hats game analysis.
We proved several theorems, which allow us to build new winning graphs by 
combining the graphs, for which their winning property is already proved.
We call these theorems constructors, they provide us a powerful machinery for building the sages strategies. This machinery was partially extended by Bla\v{z}ej et al. in \cite{blazej_bears_2021}, where the technique of constructors is combined with the independence polynomials approach.

In this paper, we continue to study the \hats game with variable number of colors and obtain some results about hat guessing numbers. 
Using ideas of  Bla\v{z}ej et al. \cite{blazej_bears_2021}, we keep track the positivity of independence polynomials that allows us to find the hat guessing numbers of some special graphs obtained by constructors. In this way we construct graphs which hat guessing number equals $(4/3)\Delta$, where $\Delta$ is the maximum degree. 
We give a complete answer to the question 5.5 from~\cite{he20_hat_guess_books_windm} and
demonstrate how one can build a graph $G$ with an arbitrary diameter and independently at the same moment an arbitrary hat guessing number. It is natural to demand here the graph $G$ to be minimal in the sense that hat guessing number of every proper subgraph is less than $\hgn(G)$.

\bigskip

The rest of the paper is organized as follows.
In section 2 we give necessary definitions and notations, recall several
theorem-constructors from~\cite{kokhas_cliques_I_2021,  kokhas_cliques_II_2021, blazej_bears_2021}. 
In section 3 in terms of independence polynomials we define maximal games and prove theorem that 
connects maximal games with constructors. Construction of graphs $G$ for which $\hgn(G)=\Delta+N$ or $\hgn(G)=(4/3) \Delta$ are given in section 4.
In section 5 we define a clique extension of graph and prove several technical results concerning clique extensions and their independence polynomials. We demonstrate how this technique can be applied for estimating of the smallest root of the independence polynomial. In section 6 we prove that diameter and hat guessing number are independent parameters of a graph.

\section{Definitions, notations, constructors}

\subsection{Definitions and notations}

We use the following notations.

$G = \langle {V, E} \rangle$ is a visibility graph, i.\,e.\ graph with the sages
in its vertices. We identify the sages with the graph vertices.

$h\colon V\to \NN$ is a \emph{``hatness'' function}, which means the number of
different hat colors, which a sage can get. For a sage $A\in V$, we will call the
number $h(A)$ the \emph{hatness} of sage $A$. We may assume
that the hat color of sage $A$ is the number from the set $[h(A)]=\{0, 1, 2, \dots, h(A)-1\}$.

$g\colon V\to \NN$ is a \emph{``guessing'' function} that determines the number of guesses each sage is allowed to make.

We often consider functions on $V$ as vectors and denote them in boldface: $h=(h(v))_{v\in V}=\bm{h}=\bm{h}_V$. For $W\subset V$ the restriction of vector $\bm{x}=\bm{x}_V$ to $W$ is denoted $\bm{x}_W$.
A function that is equal to a constant $m$ we will denote $\cnst{m}$.

\begin{definition}
  A hat guessing game or \hats for short is a pair $\HG=\langle {G, h} \rangle$, where $G$ is a visibility graph, and $h$ is a hatness function. So, the sages are located in the vertices of the visibility graph $G$ and participate in the \emph{test}. During the test every sage $v$ gets a hat of one of $h(v)$ colors. The sages do not communicate and try to guess colors of their own hats. And if at least one of their guesses is correct, the sages \emph{win}, or the game is \emph{winning}. In this case, we say that the graph is winning too, keeping in mind that this property depends on the hatness function. The games where the sages have no winning strategy we call \emph{losing}.

In \cite{blazej_bears_2021} Bla\v{z}ej  et al.\ consider the following generalized hat guessing game in which multiple guesses are allowed. A \emph{generalized} or \emph{non-uniform} hat guessing game is a triple $\HG=\langle {G, h, g} \rangle$, where $G$ is a visibility graph, $h$ is a hatness function and $g$ a guessing function.  In this game every sage $A$
makes simultaneously $g(A)$ guesses during the test.
The sages win if for at lest one of sages the color of his hat coincides with one of his guesses. 

It is clear that \hats game  $\langle {G, h} \rangle$ is the same as the generalized game $\langle {G, h, \star1 } \rangle$. The classic \hats game where the  hatness function has constant value $m$ is denoted  $\langle {G, \cnst{m}} \rangle$.
\end{definition}

\subsection{Constructors}

We call constructors the theorems, which allow us to build new winning graphs by combining the graphs, for which their winning property is already proved. Here are several constructors from the papers~\cite{kokhas_cliques_I_2021,  kokhas_cliques_II_2021, blazej_bears_2021}.

\begin{figure}[h]

\begin{minipage}[t]{0.49\linewidth}
\begin{center}
\setlength{\unitlength}{.6mm}\footnotesize
\begin{picture}(40,38)(10,5)
\put(10,0){\circle*{2}}\put(20,0){\circle*{2}}
\put(10,5){\circle*{2}}\put(20,10){\circle*{2}}
\put(10,20){\circle*{2}}\put(20,20){\circle*{2}}
\put(10,0){\line(1,0){10}}\put(10,0){\line(0,1){20}}
\put(20,0){\line(0,1){10}}\put(20,0){\line(-2,1){10}}
\put(10,20){\line(1,0){10}}
\put(20,10){\line(0,1){10}}\put(20,10){\line(-1,1){10}}
\put(25,8){$S$}\put(26,21){$v$}
\put(-1,8){$G_1$}
\put(20,37){$G_2$}
\put(16,16){\circle{18}}
\put(25,25){
\put(0,0){\circle*{2}}\put(5,15){\circle*{2}}
\put(10,0){\circle*{2}}\put(20,20){\circle*{2}}\put(22,10){\circle*{2}}
\polyline(22,10)(10,0)(0,0)(5,15)(20,20)(22,10)(5,15)(10,0)(20,20)
}
\end{picture}
\begin{picture}(33,38)(-5,5)
\put(-10,10){\vector(1,0){7}}
\put(10,0){\circle*{2}}\put(20,0){\circle*{2}}
\put(10,5){\circle*{2}}\put(20,10){\circle*{2}}
\put(10,20){\circle*{2}}\put(20,20){\circle*{2}}
\put(10,0){\line(1,0){10}}\put(10,0){\line(0,1){20}}
\put(20,0){\line(0,1){10}}\put(20,0){\line(-2,1){10}}
\put(10,20){\line(1,0){10}}
\put(20,10){\line(0,1){10}}\put(20,10){\line(-1,1){10}}
\put(25,8){$S$}
\put(16,16){\circle{18}}
\put(25,25){
\put(5,15){\circle*{2}}
\put(10,0){\circle*{2}}\put(20,20){\circle*{2}}\put(22,10){\circle*{2}}
\polyline(22,10)(10,0)(-5,-5)(5,15)(20,20)(22,10)(5,15)(10,0)(20,20)
\polyline(10,0)(-15,-5)(5,15)(-5,-15)(10,0)
}
\end{picture}
\end{center}
  \caption{Game $G_1 \gplus{S,v} G_2$}
  \label{fig:multiplication}
 \end{minipage} 
\hfil
\begin{minipage}[t]{0.49\linewidth}  
\begin{center}
\setlength{\unitlength}{.6mm}\footnotesize
\begin{picture}(35,38)(5,5)
\put(10,0){\circle*{2}}\put(20,0){\circle*{2}}
\put(10,5){\circle*{2}}\put(20,10){\circle*{2}}
\put(10,20){\circle*{2}}\put(20,20){\circle*{2}}
\put(10,0){\line(1,0){10}}\put(10,0){\line(0,1){20}}
\put(20,0){\line(0,1){10}}\put(20,0){\line(-2,1){10}}
\put(10,20){\line(1,0){10}}
\put(20,10){\line(0,1){10}}\put(20,10){\line(-1,1){10}}

\put(16.3,16){$v$}\put(28.7,16){$v$}
\put(-1,8){$G_1$}
\put(30,39){$G_2$}
\put(28,20){
\put(0,0){\circle*{2}}\put(5,15){\circle*{2}}
\put(15,0){\circle*{2}}\put(20,20){\circle*{2}}\put(20,10){\circle*{2}}
\polyline(20,10)(15,0)(0,0)(5,15)(20,20)(20,10)(5,15)(15,0)(20,20)
}
\end{picture}
\begin{picture}(43,38)(-10,5)
\put(-10,8){\vector(1,0){7}}
\put(10,0){\circle*{2}}\put(20,0){\circle*{2}}
\put(10,5){\circle*{2}}\put(20,10){\circle*{2}}
\put(10,20){\circle*{2}}\put(20,20){\circle*{2}}
\put(10,0){\line(1,0){10}}\put(10,0){\line(0,1){20}}
\put(20,0){\line(0,1){10}}\put(20,0){\line(-2,1){10}}
\put(10,20){\line(1,0){10}}
\put(20,10){\line(0,1){10}}\put(20,10){\line(-1,1){10}}
\put(16.3,16){$v$}
\put(20,20){
\put(0,0){\circle*{2}}\put(5,15){\circle*{2}}
\put(15,0){\circle*{2}}\put(20,20){\circle*{2}}\put(20,10){\circle*{2}}
\polyline(20,10)(15,0)(0,0)(5,15)(20,20)(20,10)(5,15)(15,0)(20,20)
}
\end{picture}
\end{center}
  \caption{Game $G_1 \times_{v} G_2$}
  \label{fig:product}
\end{minipage}  
\end{figure}

\begin{definition}
  Let $G_1=(V_1, E_1)$, $G_2=(V_2, E_2)$ be two graphs, 
  $S\subseteq G_1$ be a clique, 
  and $v\in V_2$. Set $G=(V,E)$ to be the clique join of graphs $G_1$ and $G_2$ with respect to $S$ and $v$  (fig.~\ref{fig:multiplication}). We say that graph $G$ is \emph{a sum of graphs $G_1$, $G_2$ with respect to $S$ and $v$} and denote it by  $G=G_1\gplus{S,v}G_2$. We say that vector $\mathbf x =(x_v)_{v\in V}$ is a \emph{gluing} of vectors $\mathbf x_1 =(x_{1v})_{v\in V_1}$ and $\mathbf x_2 =(x_{2v})_{v\in V_2}$ and denote it by $\mathbf x =\mathbf x_1 \gplus{S,v} \mathbf x_2$, if 
$$
x_v=\begin{cases} 
x_{1u}x_{2v} & u\in S,\\   
x_{1u} & u\in V_1\setminus S,\\
x_{2u} & u\in V_2\setminus\{v\}.\end{cases}
$$
   Let $\HG_1 =\left\langle G_1, h_1 \right\rangle$, $\HG_2 =\left\langle G_2, h_2\right\rangle$ be two games. \emph{A sum} of games $\HG_1$, $\HG_2$ with respect to $S$ and $v$ or $(S,v)$-sum for short is a game $\HG =\langle {G_1 \gplus{S,v} G_2,  h_1 \gplus{S,v} h_2} \rangle$ (fig.~\ref{fig:multiplication}).
  We denote the sum by $\HG_1\gplus{S,v}\HG_2$. The sum of generalized hat guessing games $\HG_1 =\left\langle G_1, h_1, g_1 \right\rangle$, $\HG_2 =\left\langle G_2, h_2, g_2\right\rangle$
  is defined similarly: $\HG_1\gplus{S,v}\HG_2=\langle {G_1 \gplus{S,v} G_2,  h_1 \gplus{S,v} h_2, g_1 \gplus{S,v} g_2} \rangle$.
\end{definition}

\begin{theorem}[on sum of games]  \label{thm:sum-of games}
  Let $\HG_1 =\left\langle G_1, h_1, g_1 \right\rangle$, $\HG_2 =\left\langle G_2, h_2, g_2\right\rangle$ be two winning games, 
  $S\subseteq G_1$ be a clique, 
  and $v\in V_2$. Then the game $\HG = \HG_1 \gplus{S,v} \HG_2$ is also winning.
\end{theorem}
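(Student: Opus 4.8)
The plan is to build a winning strategy for $\HG=\HG_1\gplus{S,v}\HG_2$ out of the given winning strategies for $\HG_1$ and $\HG_2$, letting the clique $S$ play the role of the absorbed vertex $v$. First I would fix notation for the colors. A sage $u\in S$ has hatness $h_1(u)h_2(v)$, so I write its color as a pair $(a_u,b_u)$ with $a_u\in[h_1(u)]$ and $b_u\in[h_2(v)]$; a sage $u\in V_1\setminus S$ keeps a single color $a_u\in[h_1(u)]$, and a sage $u\in V_2\setminus\{v\}$ keeps a color $d_u\in[h_2(u)]$. Then $\bm a=(a_u)_{u\in V_1}$ is a bona fide coloring of $G_1$, while $(b_u)_{u\in S}$ together with $(d_u)_{u\in V_2\setminus\{v\}}$ is a coloring of $G_2$ in which only the color of $v$ itself is left unspecified. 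The guessing function glues the same way, so a sage $u\in S$ is allowed exactly $g_1(u)g_2(v)$ guesses, which is precisely the size of a product $A_u\times B$ with $|A_u|=g_1(u)$ and $|B|=g_2(v)$; this is what makes the product strategy below dimensionally correct.

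Next I would describe the strategy. Every sage in $V_1\setminus S$ simply runs its $\HG_1$-strategy: its neighborhood in $G$ coincides with its neighborhood in $G_1$, and it can read the first coordinates off any neighbor in $S$, so it produces its $G_1$-guess set. Every sage $u\in S$ is adjacent in $G$ to all of $N_{G_2}(v)$, hence it sees the colors $(d_y)_{y\in N_{G_2}(v)}$ and can compute the guess $B\subseteq[h_2(v)]$ that $v$ would make in $\HG_2$; note that this $B$ is the same for all $u\in S$. The sage $u\in S$ also computes its $\HG_1$-guess $A_u$ from the colors it sees on $N_{G_1}(u)$, and it finally guesses the product $A_u\times B$. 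Every sage $w\in V_2\setminus\{v\}$ runs its $\HG_2$-strategy; if $w$ is not adjacent to $v$ in $G_2$ this is immediate, and if $w\in N_{G_2}(v)$ then $w$ has lost sight of $v$ but now sees the whole clique $S$, so it substitutes for the missing color of $v$ a \emph{simulated color} $\bar b\in[h_2(v)]$ that it reads off $S$ by a rule fixed in advance and common to all former neighbors of $v$ (this is legitimate because all of them see all of $S$).

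The verification then runs through the two winning hypotheses. Because $\HG_1$ is winning, the coloring $\bm a$ is won in $G_1$: some sage guesses correctly. If that sage lies in $V_1\setminus S$ we are done immediately, so assume the only $G_1$-winners lie in $S$; fix one, say $u^*$, so that $a_{u^*}\in A_{u^*}$. Because $\HG_2$ is winning, the $G_2$-coloring obtained by assigning $v$ the simulated color $\bar b$ is also won: some $G_2$-sage guesses correctly. If that sage is a genuine vertex $w\ne v$, then its $\HG_2$-guess is exactly the guess it makes in $\HG$ (a non-neighbor of $v$ is unaffected, and a neighbor uses the same $\bar b$), so $w$ wins in $\HG$ and we are done; the remaining possibility is that the $G_2$-winner is $v$ itself, which tells us $\bar b\in B$. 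Thus the one situation not yet settled is: the $G_1$-win happens only inside $S$, at $u^*$, and $u^*$ fails in $\HG$ because although $a_{u^*}\in A_{u^*}$ we have $b_{u^*}\notin B$.

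The heart of the proof, and the step I expect to be the main obstacle, is to rule this situation out, i.e.\ to guarantee that the simulated color equals the second coordinate of whichever clique vertex wins the $G_1$-game, so that $u^*$ guesses $(a_{u^*},b_{u^*})$ correctly. This is exactly where the hypothesis that $S$ is a \emph{clique} is indispensable: the members of $S$ are mutually visible and are simultaneously visible to every former neighbor of $v$, so the clique can serve as a common channel on which a single consistent simulated color is broadcast. When $S=V_1$ (so $G_1$ is itself a clique) the mechanism is transparent: the identity of the winning vertex of $G_1$ is a function of $\bm a_S$ alone, every former neighbor of $v$ sees $\bm a_S$, and so all of them can agree to simulate precisely the color $b_{u^*}$, forcing the win. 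The work in the general case is to set up the broadcast rule, and if necessary to refine the choice of $\HG_1$-strategy and the split of the product guesses on $S$, so that the simulated color is pinned to $b_{u^*}$ whenever the $G_1$-win is confined to $S$; once this alignment is achieved, the two winning hypotheses combine to produce a correct guess for every assignment, proving that $\HG$ is winning.
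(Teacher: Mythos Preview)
Your overall architecture (split $S$-colors into pairs, let $V_1\setminus S$ play $\HG_1$, let $V_2\setminus\{v\}$ play $\HG_2$ against a simulated color $\bar b$, and let each $u\in S$ guess a product set) matches the proof the paper cites from Bla\v{z}ej et al., and your case analysis is the right one. But the step you yourself flag as ``the main obstacle'' is a genuine gap, and the direction you suggest for closing it cannot work: you propose to pin $\bar b$ to $b_{u^*}$, where $u^*\in S$ is the $\HG_1$-winner, yet $u^*$ depends on the full coloring $(a_w)_{w\in V_1}$, while a sage $w\in N_{G_2}(v)$ sees only the coordinates indexed by $S$. When $S\subsetneq V_1$ no ``refinement of the $\HG_1$-strategy'' can give $w$ enough information to determine $u^*$, so $\bar b=b_{u^*}$ is simply not computable by $w$.

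The missing idea is not to identify the winner but to make it irrelevant. Set $\bar b:=\sum_{u\in S} b_u \bmod h_2(v)$; every $w\in N_{G_2}(v)$ sees all of $S$ and can compute this. Each $u\in S$, because $S$ is a clique, sees $b_{u'}$ for all $u'\in S\setminus\{u\}$ as well as $N_{G_2}(v)$, so it computes $B$ as you describe and then guesses $A_u\times B_u$ with $B_u:=\bigl(B-\sum_{u'\ne u} b_{u'}\bigr)\bmod h_2(v)$, still of size $g_1(u)\,g_2(v)$. Now if in the simulated $\HG_2$-game the only winner is $v$, i.e.\ $\bar b\in B$, then $b_u\in B_u$ for \emph{every} $u\in S$ simultaneously; in particular for whichever $u^*$ happens to win $\HG_1$, giving $(a_{u^*},b_{u^*})\in A_{u^*}\times B_{u^*}$, and your case split closes with no residual case. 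This shift is exactly where the clique hypothesis on $S$ is used.
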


This theorem was proven in \cite[Lemma 7]{blazej_bears_2021}. 
For partial case where $S$ is a~single vertex, a sum of \hats games was considered in \cite{kokhas_cliques_I_2021} where it is called a product of games. 
We will use this operation often so let us described it in more details.

Let $\HG_1 =\left\langle G_1, h_1 \right\rangle$, $\HG_2 =\left\langle G_2, h_2\right\rangle$ be two games, and let one vertex in $G_1$ and one vertex in $G_2$ be marked $A$. Let $S=\{A\}\subset V_1$, $v=A\in V_2$. \emph{A~product of games $\HG_1$, $\HG_2$ with respect to vertex $A$} is just a $(\{A\},v)$-sum of $\HG_1$ and $\HG_2$.  We will denote product of games by more elegant notation $\HG = \HG_1 \gtimes{A} \HG_2$ from~ \cite{kokhas_cliques_I_2021}. For vectors and functions on $V$ we will use the sign $\gplus{A}$ instead of $\gplus{\{A\},A}$. 

The following theorem on game products was proven in \cite[Theorem 3.1]{kokhas_cliques_I_2021}, according to current views it is just a corollary of the above theorem.

\begin{corollary}[theorem on game products]  \label{thm:multiplication}
  Let $\HG_1 = \langle {G_1, h_1} \rangle$ and $\HG_2 = \langle {G_2, h_2}
  \rangle$ be two games such that $V(G_1)\cap V(G_2)=\{A\}$. If the sages win in
  games $\HG_1$ and $\HG_2$, then they win also in game  $\HG = \HG_1\gtimes{A}
  \HG_2$.
\end{corollary}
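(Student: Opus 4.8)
The plan is to exhibit the product as a degenerate instance of the sum of games and then invoke Theorem~\ref{thm:sum-of games} directly, since the paragraph preceding the corollary has already identified the product $\HG_1 \gtimes{A} \HG_2$ with the $(\{A\},A)$-sum. First I would observe that a single vertex is always a clique, so the choice $S=\{A\}\subseteq V_1$ together with $v=A\in V_2$ is a legitimate instance of the sum construction; moreover the hypothesis $V(G_1)\cap V(G_2)=\{A\}$ is exactly the situation in which the clique $\{A\}$ of $G_1$ and the vertex $A$ of $G_2$ are glued, so the resulting graph $G_1 \gplus{\{A\},A} G_2$ is the product graph and the hatness at $A$ becomes $h_1(A)h_2(A)$ as prescribed by the gluing formula.

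Next I would pass from classic games to generalized ones. Since $\langle G_i, h_i\rangle = \langle G_i, h_i, \cnst{1}\rangle$, the hypothesis that the sages win in $\HG_1$ and $\HG_2$ says precisely that the generalized games $\langle G_1, h_1, \cnst{1}\rangle$ and $\langle G_2, h_2, \cnst{1}\rangle$ are winning. Applying Theorem~\ref{thm:sum-of games} to these two generalized games with $S=\{A\}$ and $v=A$ immediately yields that their sum $\HG_1 \gplus{\{A\},A} \HG_2$ is winning.

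The only point requiring a short check is that this resulting generalized game is again a classic one, i.e.\ that its guessing function equals the constant $\cnst{1}$. This follows by evaluating the gluing formula on the guessing functions: at the glued vertex $A$ we obtain $1\cdot 1 = 1$, while at every other vertex the value is inherited unchanged as $1$ from $g_1$ or $g_2$. Hence $\cnst{1}\gplus{A}\cnst{1}=\cnst{1}$, the sum game coincides with the classic product $\HG=\HG_1\gtimes{A}\HG_2$, and the winning property transfers. I do not expect any genuine obstacle here: all the substantive content is carried by Theorem~\ref{thm:sum-of games}, and the remaining work is only the bookkeeping verifying that a one-element clique glued at the single shared vertex reproduces the product construction.
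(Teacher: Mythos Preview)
Your argument is correct and is precisely the approach the paper intends: it states outright that the product is the special case $S=\{A\}$, $v=A$ of the sum and that the corollary follows from Theorem~\ref{thm:sum-of games}. Your additional bookkeeping that $\cnst{1}\gplus{A}\cnst{1}=\cnst{1}$ merely makes explicit what the paper leaves implicit.
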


We need one more constructor from  \cite{kokhas_cliques_I_2021}.
By the \emph{substitution of graph~$G_1$ to graph~$G_2$ on the place of vertex~$v$} we call the graph $G_1\cup (G_2\setminus\{v\})$ with adding of all edges, that connect each vertex of $G_1$ with each neighbor of~$v$, see  fig.~\ref{fig:subs}. %

\begin{figure}[h]
\footnotesize
\setlength{\unitlength}{300bp}%
\begin{center}
  \begin{picture}(1,0.1290902)%
    \put(0,0){\includegraphics[width=\unitlength,page=1]{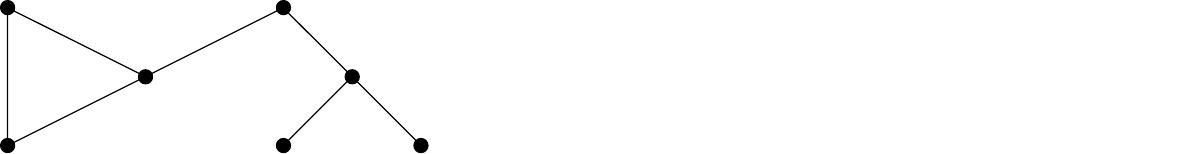}}%
    \put(0.11107517,0.03557355){\makebox(0,0)[lb]{\smash{$v$}}}%
    \put(0.48261333,0.07033853){\makebox(0,0)[lb]{\smash{$\longrightarrow$}}}%
    \put(0.45943632,0.02398464){\makebox(0,0)[lb]{\smash{$v:=$}}}%
    \put(0,0){\includegraphics[width=\unitlength,page=2]{subs.pdf}}%
  \end{picture}
\end{center}
  \caption{A substitution.}\label{fig:subs}
\end{figure}

\begin{corollary}\label{thm:substitution} %
  Let the sages win in games $\HG_1=\langle {G_1, h_1} \rangle$ and $\HG_2=\langle
  {G_2, h_2} \rangle$, where $G_1$ is a complete graph. Let $v\in V(G_2)$ be an arbitrary vertex and $G$ be the graph of substitution $G_1$ on place $v$. Then the game $\HG=\langle{G, h} \rangle$ is
  winning, where
  \[
    h(u) = \begin{cases}h_2(u)&u\in G_2\\ h_1(u)\cdot h_2(v)&u\in G_1\end{cases}
  \]
\end{corollary}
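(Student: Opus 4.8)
The plan is to exhibit the substitution as the special case of the sum-of-games construction (Theorem~\ref{thm:sum-of games}) in which the clique $S$ is taken to be all of $G_1$. The hypothesis that $G_1$ is complete is exactly what legitimizes this choice: in a complete graph every vertex subset, in particular $S=V(G_1)$ itself, is a clique. Thus I would apply Theorem~\ref{thm:sum-of games} with $S=V(G_1)$ and with $v$ the designated vertex of $G_2$, after checking that the resulting sum coincides, as a game, with the substitution game $\HG$.

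First I would check that the substitution graph coincides with the clique join $G_1\gplus{S,v}G_2$ for this choice of $S$. Both graphs have vertex set $V(G_1)\cup(V(G_2)\setminus\{v\})$; both retain all edges of $G_1$ and all edges of $G_2$ not incident to $v$; and in both, the added edges are precisely those joining each vertex of $G_1$ to each neighbor of $v$ in $G_2$. For the clique join with $S=V(G_1)$ the added edges are ``each vertex of $S$ to each neighbor of $v$'', which is verbatim the definition of substitution. So the underlying graphs are identical.

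Second I would verify that the hatness and guessing data match. With $S=V(G_1)$ the set $V_1\setminus S$ is empty, so the gluing formula reduces to $h(u)=h_1(u)\,h_2(v)$ for $u\in G_1$ (the case $u\in S$) and $h(u)=h_2(u)$ for $u\in V(G_2)\setminus\{v\}$, which are exactly the two cases stated in the corollary; that is, $h=h_1\gplus{S,v}h_2$. Regarding the strategy, I would read $\HG_1,\HG_2$ as the generalized games $\langle G_1,h_1,\cnst{1}\rangle$ and $\langle G_2,h_2,\cnst{1}\rangle$, whose glued guessing function is again the constant $\cnst{1}$, so the sum is an ordinary single-guess game.

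Once these identifications are in place, the conclusion is immediate: Theorem~\ref{thm:sum-of games} applied to the winning games $\HG_1$ and $\HG_2$ shows that $\HG_1\gplus{S,v}\HG_2$ is winning, and by the above this game is exactly $\HG=\langle G,h\rangle$. The only step needing genuine care is the combinatorial bookkeeping of the first paragraph, namely confirming that the clique join adds \emph{all} edges between $S$ and the neighborhood of $v$ and \emph{no} others, so that it agrees with the substitution edge-for-edge. There is no real difficulty beyond this matching, which is precisely why the statement is recorded as a corollary of the sum theorem rather than proved from scratch.
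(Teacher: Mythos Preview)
Your proposal is correct and matches the paper's own justification: the paper simply remarks that when $G_1$ is complete, the substitution is the special case of the $(S,v)$-sum with $S=V(G_1)$, and invokes Theorem~\ref{thm:sum-of games}. Your more detailed verification of the graph, hatness, and guessing-function identifications fills in exactly the bookkeeping that the paper leaves implicit.
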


If $G_1$ is a complete graph the substitution is a partial case of $(S,v)$-sum of games for $S=V(G_1)$.  The general theorem on substitutions claims that the substitution game is winning if $\HG_1$ and $\HG_2$ are arbitrary winning games, see \cite[Theorem 3.2]{kokhas_cliques_I_2021}.

\begin{theorem}\label{thm:sum-lose}\cite[Theorem 4.1]{kokhas_cliques_II_2021}
  Let $G_1$ and $G_2$ be graphs such that $V(G_1)\cap  V(G_2)=\{A\}$,  $G=  G_1+_AG_2$.  And let games $\HG_1=\langle G_1, h_1\rangle$ and $\HG_2=\langle G_2, h_2\rangle $ be losing, $h_1(A)\geq h_2(A)=2$. Then game $\HG=\langle G_1+_AG_2, h\rangle$ is losing, where
  $$
  h(x) =
  \begin{cases}
    h_1(x),&x\in V(G_1)\\
    h_2(x),&x\in V(G_2)\setminus A.
  \end{cases}
  $$
\end{theorem}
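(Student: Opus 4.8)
The plan is to argue by the adversary, i.e.\ to show that against \emph{any} strategy in $\HG$ the adversary can colour the heads so that every sage errs. Write $N=V(G_1)\setminus\{A\}$ and $M=V(G_2)\setminus\{A\}$. Since the only edges between the two sides pass through $A$, the visibility splits cleanly: a sage of $N$ sees only colours on $V(G_1)$, a sage of $M$ sees only colours on $V(G_2)$, and $A$ sees its neighbours $N_1(A)\subseteq N$ on one side and $N_2(A)\subseteq M$ on the other. Fix a strategy and denote by $g_A$ the guess of $A$, a function of the colours on $N_1(A)$ and on $N_2(A)$ with values in $[h(A)]=[h_1(A)]$; recall that $h(A)=h_1(A)\ge h_2(A)=2$.

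First I would exploit that $\HG_2$ is losing together with $h_2(A)=2$. For a prescribed colour $a$ of $A$ I feed the $M$-guesses of the strategy into $\HG_2$ (restricting to $c_A\in\{0,1\}$, and, when $a\ge 2$, after relabelling the two colours of $A$ in $G_2$ onto a pair $\{a,a'\}\subseteq[h_1(A)]$), letting $A$ itself guess a suitable constant colour. Because $\HG_2$ is losing the adversary wins it, and since $A$'s constant guess must also be wrong the colour it assigns to $A$ is forced to equal the prescribed $a$. This yields, for \emph{every} $a\in[h_1(A)]$, a colouring $c_M(a)$ of $M$ that makes all $M$-sages err provided $c_A=a$. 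The point of $h_2(A)=2$ is exactly this: two colours let the constant guess \emph{select} the value of $A$, so that the $M$-side can be neutralised for a chosen colour of $A$.

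Next I would use that $\HG_1$ is losing. For a fixed colouring $c_M$ of $M$ the guess $g_A(\,\cdot\,,c_M)$ becomes a function of the colours on $N_1(A)$ alone, so the $N$-guesses together with this reduced $A$-guess form a genuine strategy for $\HG_1$. As $\HG_1$ is losing the adversary returns a bad colouring of $V(G_1)$: a colour $c_A$ of $A$ and a colouring $c_N$ of $N$ for which all $N$-sages err and $A$ errs.

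The crux is to reconcile the two sides at $A$: the colour $c_A$ produced by the $G_1$-argument must coincide with the colour $a$ for which $c_M(a)$ was built, yet $A$ cannot see $c_A$, so its reduced $G_1$-guess can only be formed after $c_M$ is already fixed; this makes the dependence $a\mapsto c_M(a)\mapsto c_A$ self-referential. The clean way to see that a consistent value must exist is the dual picture: if the sages survived every colouring, then on the conditional ``kill region'' $X_a\times Y_a\subseteq\prod_{N_1(A)}[h]\times\prod_{N_2(A)}[h]$ of neighbour-colours extendable to an all-$N$-wrong, all-$M$-wrong completion at $c_A=a$, the guess $g_A$ would be forced to equal $a$; hence these regions, over distinct $a$, would have to be pairwise disjoint. \textbf{The main obstacle} is to show that the losing of $\HG_1$ and $\HG_2$ forces the factors $X_a,Y_a$ to be so large that the products $X_a\times Y_a$ cannot all be pairwise disjoint, so that some neighbour-colouring lies in two of them and demands two values of $g_A$ at once; it is here, through a count that $h_2(A)=2$ makes tight, that both losing hypotheses and the arithmetic $h_1(A)\ge h_2(A)=2$ are consumed. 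The small cases already display this forced overlap (two $2$-subsets of a $3$-set always meet, and more colours than $2$ values must repeat by pigeonhole), and the general estimate should proceed along the same lines.
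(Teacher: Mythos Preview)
The paper does not prove this theorem at all; it is quoted from \cite{kokhas_cliques_II_2021} and used as a black box. So there is no ``paper's proof'' to compare against, and your attempt must stand on its own.

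Your setup is correct and your dual reformulation is the right one: if $\sigma$ were winning for $\HG$, then $g_A\equiv a$ on each product $X_a\times Y_a$, so these products are pairwise disjoint. But you stop precisely at the point that matters. You call the non-disjointness ``the main obstacle'' and gesture at a size/pigeonhole estimate; no such estimate is supplied, and in fact the resolution is not a counting argument at all. The missing step is this pair of direct consequences of the losing hypotheses:
\begin{itemize}
\item From $\HG_1$ losing: there exist $a_0\neq a_1$ and a single $N_1(A)$-view $x$ with $x\in X_{a_0}\cap X_{a_1}$. Indeed, if every $x$ lay in at most one $X_a$, then letting $A$ guess that unique $a$ (and anything when there is none) would, together with $\sigma|_N$, win $\HG_1$.
\item From $\HG_2$ losing with $h_2(A)=2$: for \emph{these same} $a_0,a_1$ one has $Y_{a_0}\cap Y_{a_1}\neq\varnothing$. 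Otherwise, for each $N_2(A)$-view $y$ at least one of $a_0,a_1$ forces an $M$-sage to be correct, and $A$ guessing the other colour (together with the $M$-sages' $\sigma$-strategy restricted to $c_A\in\{a_0,a_1\}$) would win $\HG_2$.
\end{itemize}
Picking $x\in X_{a_0}\cap X_{a_1}$ and $y\in Y_{a_0}\cap Y_{a_1}$ gives $(x,y)\in (X_{a_0}\times Y_{a_0})\cap(X_{a_1}\times Y_{a_1})$, contradicting disjointness and finishing the proof. This is exactly where $h_2(A)=2$ is spent: two colours $a_0,a_1$ extracted from the $G_1$-side suffice to run the $\HG_2$-argument. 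Your Step~1 (producing $c_M(a)$ for every $a$) is correct but ultimately unnecessary; what is needed is the intersection statement above, not individual $c_M(a)$'s.

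In short: the architecture is right, but the proof is unfinished, and the hint you leave (``large enough that products cannot be disjoint'') points in the wrong direction. The two bullet points above are the entire missing content.
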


\begin{theorem}\label{thm:2leaf-attach-lose}
  Let $\HG = \langle {G, h} \rangle$ be a loosing game, $B$ be an arbitrary
  vertex of graph~$G$. Consider graph $G'=\left\langle V', E' \right\rangle$,
  obtained by attaching new pendant vertex~$A$ to graph~$G$: $V'=V\cup \{A\}$,
  $E'=E\cup \{AB\}$. Then the sages loose in game $\langle G',h'\rangle$, where
  $h(A)=2$,  $h'(B)=2h(B)-1$ and $h'(u)=h(u)$ for other vertices $u\in V$.
\end{theorem}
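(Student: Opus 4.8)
The plan is to prove the contrapositive: from any winning strategy in $\langle G', h'\rangle$ I will manufacture a winning strategy in $\langle G, h\rangle$, which is impossible since $\langle G, h\rangle$ is losing. (Here $A$ is a new vertex, so the statement's ``$h(A)=2$'' must read $h'(A)=2$.) The whole difficulty is that $A$ and $B$ see each other, so their guesses are mutually dependent; the role of the enlarged hatness $h'(B)=2h(B)-1$ is exactly to let this two-way dependence collapse into a single effective sage on $G$.

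Fix a winning strategy in $\langle G', h'\rangle$. The pendant $A$ has $h'(A)=2$ and sees only $B$, so its guess is a function $f\colon [h'(B)]\to\{0,1\}$ of $B$'s color. Since $|[h'(B)]| = 2h(B)-1$, one of the two preimages $C_0=f^{-1}(0)$, $C_1=f^{-1}(1)$ has at least $h(B)$ elements; say $|C_{i_0}|\ge h(B)$, and choose $T\subseteq C_{i_0}$ with $|T|=h(B)$. The key maneuver is to freeze the color of $A$ at $1-i_0$: then whenever $B$'s color lies in $C_{i_0}$, and in particular whenever it lies in $T$, sage $A$ guesses $i_0\neq 1-i_0$ and is therefore always wrong.

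Now I would build a strategy for $\langle G, h\rangle$, using $T$ (relabelled as $[h(B)]$) as $B$'s palette. The crucial structural fact is that $A$ is pendant, so every vertex of $G$ other than $B$ has the same neighbourhood in $G$ and in $G'$ and never sees $A$; each such sage simply copies its $G'$-guess, which is well defined because all colors it observes, including any color of $B$ drawn from $T\subseteq[h'(B)]$, are legal in $\langle G', h'\rangle$, and the palettes of these vertices coincide in the two games. Sage $B$, which in $G'$ sees both $A$ and its $G$-neighbours, copies its $G'$-guess with the $A$-argument hard-wired to $1-i_0$; if that guess falls outside $T$ it simply counts as an incorrect guess in the $G$-game.

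It remains to verify that this derived strategy is winning. Any coloring of $G$ (with $B$ colored from $T$) lifts to a coloring of $G'$ by setting $A$'s color to $1-i_0$, and the colors seen by every sage are identical in the two games. By assumption the $G'$-strategy produces at least one correct guess on this lifted coloring; that guess cannot be $A$'s, since $A$ is wrong by construction, so it belongs to $B$ or to some other sage, and in either case that sage is also correct in the derived $G$-strategy. Hence $\langle G, h\rangle$ would be winning. The main point needing care is the pigeonhole step, and the reason $B$'s hatness must grow to exactly $2h(B)-1$: the doubling-minus-one is precisely what forces one of the two preimages of $A$'s guessing function to be large enough to host a full $h(B)$-element palette for $B$.
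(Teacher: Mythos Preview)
Your argument is correct. The contrapositive reduction is clean: the pigeonhole on the pendant's guessing function $f\colon[2h(B)-1]\to\{0,1\}$ extracts a size-$h(B)$ subset $T$ of $B$'s palette on which $A$ can be forced to guess wrong, and the lift/restrict construction for the remaining sages is sound because $A$ is pendant (so only $B$ ever sees it) and all other hatnesses agree. The one point worth making explicit is the relabelling bijection $T\leftrightarrow[h(B)]$ for sages adjacent to $B$: their $G'$-strategy takes $B$'s color from $[2h(B)-1]$, so in the derived $G$-strategy you must first translate $B$'s $[h(B)]$-color through that bijection before invoking the old strategy. You do this implicitly, but spelling it out would remove any ambiguity.

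As to comparison with the paper: the paper states this theorem in its list of ``constructors'' imported from earlier work and gives no proof of it here, so there is nothing to compare against. Your self-contained argument is exactly the natural one and pinpoints why $2h(B)-1$ is the right threshold.
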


To apply these constructors-theorems we need ``bricks'', i.\,e.\ examples of
winning (or losing) graphs. The following theorem provides big set of examples.

\begin{theorem}\label{thm:clique-win}\cite[theorem 2.1]{kokhas_cliques_I_2021}
The \hats game $\langle {K_n, h} \rangle$ is winning if and only if 
\begin{equation}
    \sum_{v\in V(G)}\frac1{h(v)}\geq 1.
    \label{eq:clique-win}
  \end{equation}
\end{theorem}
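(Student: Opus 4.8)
The plan is to prove both implications, with the easy direction (necessity) by counting and the substantive direction (sufficiency) by an explicit strategy. Throughout write $P=\prod_{v\in V}h(v)$ for the total number of colorings, and for a fixed sage $v$ record a structural fact that holds independently of any strategy: since the graph is complete, $v$ sees the colors of all other $n-1$ sages, so its guess is a function of $(c_u)_{u\ne v}$ alone, and for each of the $\prod_{u\ne v}h(u)$ ways to color the others there is exactly one value of $c_v$ making $v$'s single guess correct. Hence the set $W_v$ of colorings caught by $v$ has size exactly $\prod_{u\ne v}h(u)=P/h(v)$, no matter what $v$ does.

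For necessity, suppose the sages win. Then every coloring is caught by someone, so $\bigcup_v W_v$ is the whole set of $P$ colorings, and the union bound gives $P=\bigl|\bigcup_v W_v\bigr|\le\sum_v|W_v|=\sum_v P/h(v)$. Dividing by $P$ yields $\sum_v 1/h(v)\ge 1$.

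For sufficiency, assume $\sum_v 1/h(v)\ge 1$ and exhibit a winning strategy by spreading colorings uniformly on a circle. Define $f(c)=\sum_{v}c_v/h(v)\bmod 1\in[0,1)$, viewing $[0,1)$ as a circle of circumference $1$. The key observation is that if we fix the colors of all sages but $v$, then as $c_v$ ranges over $\{0,\dots,h(v)-1\}$ the value $f(c)$ runs through a shifted copy of the regular grid $\{0,1/h(v),\dots,(h(v)-1)/h(v)\}$, i.e.\ $h(v)$ equally spaced points at distance $1/h(v)$. Now cut the circle into consecutive half-open arcs $I_v$ of length $1/h(v)$; since the lengths sum to at least $1$, these arcs cover the whole circle. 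The strategy of sage $v$ is to guess the unique color $c_v$ for which $f(c)$ lands in its arc $I_v$: this color is well defined and computable from what $v$ sees, because a half-open arc whose length equals the grid spacing contains exactly one of the $h(v)$ equally spaced points. Thus $v$ guesses correctly precisely when $f(c)\in I_v$, and since the arcs cover the circle, every coloring $c$ has $f(c)$ in some $I_v$, so at least one sage is always correct.

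The main thing to verify carefully is the sufficiency construction: first, the fact that exactly one point of a rotated $h(v)$-grid lies in a half-open arc of length $1/h(v)$, which is what makes each sage's guess unambiguously defined; and second, that laying down arcs of lengths $1/h(v)$ consecutively really covers the circle when $\sum_v 1/h(v)\ge 1$ (including the harmless wrap-around and overlap when the sum exceeds $1$). Everything else is bookkeeping, and notably no appeal to the constructor theorems is needed: this is a self-contained base case.
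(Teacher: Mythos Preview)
The paper does not actually prove this theorem; it is quoted as a known result from \cite{kokhas_cliques_I_2021} and used as a black box (a ``brick'' for the constructor machinery), so there is no in-paper proof to compare against.

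Your argument is correct on both halves. The necessity direction is the standard counting bound, and your sufficiency construction via the map $f(c)=\sum_v c_v/h(v)\bmod 1$ together with a covering of the circle by half-open arcs of lengths $1/h(v)$ is a clean generalization of the classical ``sage $i$ guesses so that the total sum is $\equiv i\pmod n$'' strategy. The two points you flag for care are exactly the right ones: that a half-open arc of length $1/h(v)$ contains precisely one point of any translate of the $1/h(v)$-grid (this is what makes the guess well defined), and that the arcs cover once the lengths sum to at least $1$. Both hold, including in the wrap-around/overlap case. So your proof stands on its own and matches the spirit of the original reference.
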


For generalized hat guessing games $\langle {K_n, h, g} \rangle$ the winning condition is equivalent to the inequality $ \sum\limits_{v\in V(G)}\frac{g(v)}{h(v)}\geq 1$ \cite[theorem 5]{blazej_bears_2021}.

We say that a game on complete graph $\langle {K_n, h, g} \rangle$ is \emph{precise} if 
$\sum\limits_{v\in V(G)}\frac{g(v)}{h(v)}= 1$.

\section{Independence polynomials and maximal games}

Let $G=\langle {V, E} \rangle$ be a graph. For the set of variables $\mathbf{x}=(x_v)_{v\in V}$ we define \emph{independence polynomial} of $G$ as
$$
P_{G}(\mathbf{x})=\sum_{\substack{I\subseteq V \\ I \ \text{independent set}}} \prod_{v\in I} x_v
$$
(the empty set is assumed to be independent and $\prod_{v\in\varnothing} x_v=1$).
Following \cite{blazej_bears_2021}, we consider the \emph{signed independence polynomial}
$$
Z_{G}(\mathbf{x})=P_{G}(-\mathbf{x}).
$$
The \emph{monovariate signed independence polynomial} $U_{G}(x)$ is obtained by plugging $-x$ for each variable $x_v$ of $P_{G}$.

For $v\in V$ we denote $N^+(v)$ the closed neighborhood of $v$, i.e. the set consisting of $v$ and all its neighbors. For any clique $K\subset G$ the independence polynomials $P_G$, $Z_G$ satisfy the recurrence relations 
\begin{equation}\label{eqn:indpolrec}
\begin{aligned}
P_G(\mathbf x) &= P_{G\setminus K}({\mathbf x})+ \sum_{u\in K} x_{u}P_{G\setminus N^+(u)}({\mathbf x}),\\ 
Z_G(\mathbf x) &= Z_{G\setminus K}({\mathbf x})- \sum_{u\in K} x_{u}Z_{G\setminus N^+(u)}({\mathbf x}).
\end{aligned}
\end{equation}

If $\mathbf x=(x_v)_{v\in V}$, $\mathbf y=(y_v)_{v\in V}$ are vectors, we write $\mathbf x\le\mathbf y$ if $x_v\le y_v $ for each coordinate $v$.
We write $\mathbf x\lneqq\mathbf y$ if $\mathbf x\le\mathbf y$ and $\mathbf x\ne\mathbf y$.
In other words, $x_v\le y_v $ for each coordinate $v$ and for at least coordinate the sign of inequality is strict.

Bla\v{z}ej at al. in \cite{blazej_bears_2021} develop theory of fractional hat guessing number.
For any visibility graph $G$ they consider \emph{fractional hat chromatic number}  $\hat\mu(G)$ defined as
$\hat\mu(G)=\sup\{\frac hg\mid \langle G, h, g \rangle \text{ is a winning game} \} $.
It is clear that $\hgn(G)\le \hat\mu(G)$. They prove the following facts for  generalized hat guessing games, see \cite[Proposition 9, 10 and Corollary 12]{blazej_bears_2021}.:

\begin{itemize}
\item
$\langle G, h, g \rangle$ is losing whenever $Z_G(\mathbf r)>0$, where $\mathbf r=(g_v/h_v)_{v\in V}$. %

\item
If there is a \emph{perfect} winning strategy for the hat guessing game $\langle G, h, g\rangle$, i.e. in every hat arrangement, no two sages that guess correctly are on adjacent vertices, then $Z_G(\mathbf r) = 0$ for $\mathbf r = (g_v /h_v )_{v\in V}$ and $Z_G (\mathbf w) \ge 0$ for every $0\le \mathbf w \le \mathbf r$. %

\item
For chordal graphs $G$  $\hat\mu(G)=1/r $, where $r$ is the smallest positive root of $U_G(x)$. %
\end{itemize}

Our next aim is to modify the third statement in order to control positivity of  $Z_G (\mathbf w)$.

An observation.  Let $G=(V,E)$ be a complete graph and the game $\langle G, h, g \rangle$ be precise. Then $Z_G(\mathbf x)=1-\sum\limits_{v\in V} x_v$ and the following statements hold.

1) $Z_G(\mathbf r) = 0$, where $\mathbf r=({g_v}/{h_v})_{v\in V}$.

2)  $Z_G(\mathbf x) >0$, where $\mathbf0 \le \mathbf x\lneqq \mathbf r$.

We say that the (generalized) game on an arbitrary graph $G$ is \emph{maximal} if it satisfies conditions 1) and 2). The maximal game can be winning or losing, but if we increase the hatness function (or decrease the number of guesses) in the maximal game, the game becomes loosing due to positivity of $Z_G$. Moreover, statement 2) guarantees that $Z_{G\setminus S}(\mathbf x) >0$ for any $S\subset V$, because the polynomial $Z_{G\setminus S}$ can be obtained from $Z_G$ by substitutions $x_v=0$ 
for each $v\in S$. %

\begin{theorem}\label{thm:product-maximality}
Let $\HG_1=\langle G_1, h_1, g_1 \rangle$ and $\HG_2=\langle G_2, h_2, g_2 \rangle$ be two maximal games, $S\subseteq G_1$ be a clique,
and $v\in V_2$. Then the game $\HG = \HG_1 \gplus{S,v} \HG_2$ is also maximal.
\end{theorem}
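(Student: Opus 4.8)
The plan is to verify that the sum game $\HG = \HG_1 \gplus{S,v} \HG_2$ satisfies the two defining conditions of a maximal game, namely $Z_G(\mathbf r) = 0$ and $Z_G(\mathbf x) > 0$ for $\mathbf 0 \le \mathbf x \lneqq \mathbf r$, where $\mathbf r = (g_w/h_w)_{w\in V}$ and $G = G_1 \gplus{S,v} G_2$. The heart of the matter is a factorization-type identity for the signed independence polynomial of a clique-join. Concretely, I would first establish how $Z_G$ relates to the polynomials $Z_{G_1}$ and $Z_{G_2}$ under the gluing operation, and then push the gluing of the rate vectors through this relation so that $Z_G$ can be read off from the (already known) behavior of $Z_{G_1}$ and $Z_{G_2}$ at their respective rate vectors.

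**First I would** derive the combinatorial identity relating $Z_G$ to $Z_{G_1}$ and $Z_{G_2}$. An independent set $I$ in $G = G_1 \gplus{S,v} G_2$ decomposes along the clique $S$ (glued to $v$): since $S$ is a clique, $I$ meets $S\cup\{v\}$ in at most one identified vertex. I would split the sum defining $P_G$ (equivalently $Z_G$) according to whether $I$ avoids the identified clique-vertices entirely, and use the recurrence relations~\eqref{eqn:indpolrec} with $K = S$ as the organizing tool. The expected outcome is a clean expression of the form where the gluing of variables $\mathbf x = \mathbf x_1 \gplus{S,v} \mathbf x_2$ makes $Z_G(\mathbf x)$ expressible through $Z_{G_1}(\mathbf x_1)$, $Z_{G_2}(\mathbf x_2)$, and polynomials of the deleted subgraphs $G_1\setminus S$, $G_2\setminus N^+(v)$, etc. The key structural fact driving this is that in the glued graph the variable on each merged clique-vertex $u\in S$ is the product $x_{1u}x_{2v}$, which matches exactly how the recurrence multiplies the variable of a chosen clique element by the polynomial of the graph with its closed neighborhood removed.

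**Next**, with the identity in hand, I would substitute $\mathbf x = \mathbf r = \mathbf r_1 \gplus{S,v} \mathbf r_2$. By maximality of $\HG_1$ and $\HG_2$ we have $Z_{G_1}(\mathbf r_1) = 0$ and $Z_{G_2}(\mathbf r_2) = 0$, so condition 1) should fall out from the identity once the vanishing factors are located; the arithmetic here is that the glued rate $r_u = g_{1u}g_{2v}/(h_{1u}h_{2v})$ on $S$ reproduces precisely the product structure. For condition 2), I would take $\mathbf 0 \le \mathbf x \lneqq \mathbf r$ and show $Z_G(\mathbf x) > 0$. Here I would exploit the remark after the definition of maximal game: condition 2) for $\HG_1$ and $\HG_2$ guarantees strict positivity of $Z_{G_1}(\mathbf x_1)$, $Z_{G_2}(\mathbf x_2)$, and of all their subgraph-deletions $Z_{G_i\setminus T}$, on the relevant range. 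Since $\mathbf x \lneqq \mathbf r$, at least one coordinate is strictly smaller, and I would trace which subgame inherits this strict deficit to force strict positivity of $Z_G(\mathbf x)$.

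**The hard part will be** the positivity argument in condition 2), specifically handling the coupling on the shared clique $S$. When a coordinate of $\mathbf x$ on $S$ is strictly below its rate, that deficit is a single number $x_u < x_{1u}x_{2v}$, and it is not immediately a strict deficit in either $\mathbf x_1$ or $\mathbf x_2$ separately—so I cannot directly invoke condition 2) for one fixed subgame. I expect to resolve this by choosing the factorization of each shared coordinate carefully (for instance, writing $x_u = x_{1u}'\,x_{2v}'$ with $\mathbf x_1' \le \mathbf r_1$, $\mathbf x_2' \le \mathbf r_2$ and at least one inequality strict somewhere), or by arguing monotonicity: $Z_G$ is, on the nonnegative orthant below $\mathbf r$, nonincreasing in each variable in a way that lets me compare $Z_G(\mathbf x)$ with the boundary value $Z_G(\mathbf r)=0$ and extract strictness from the one coordinate that is strictly smaller. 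The monotonicity route leans on the fact that each deletion polynomial $Z_{G\setminus N^+(u)}$ appearing in~\eqref{eqn:indpolrec} is itself strictly positive on this range, which is exactly what the extended form of condition 2) supplies.
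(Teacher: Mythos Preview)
Your proposal is correct and follows essentially the same route as the paper: expand $Z_G$ via the recurrence~\eqref{eqn:indpolrec} with $K=S$, use that deleting any $u\in S$ disconnects $G$ into pieces of $G_1$ and $G_2$ to factor each term, then read off $Z_G(\mathbf r)=0$ from $Z_{G_1}(\mathbf r_1)=Z_{G_2}(\mathbf r_2)=0$ and the strict positivity from the positivity of the deletion polynomials together with one strict inequality. The ``hard part'' you anticipate dissolves by simply taking $x_{2v}=r_{2v}$ and $x_{1u}=x_u/r_{2v}$ for $u\in S$, which forces any strict deficit on a shared coordinate into $\mathbf x_1$; the paper in fact asserts the decomposition $\mathbf x=\mathbf x_1\gplus{S,v}\mathbf x_2$ without spelling this out.
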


\begin{proof}
The sum $\HG_1\gplus{S,v}\HG_2$ is the game $\langle G, h, g \rangle$, where
$$
G=G_1\gplus{S,v} G_2, \qquad h= h_1 \gplus{S,v} h_2, \qquad g= g_1 \gplus{S,v} g_2.
$$

Let $\mathbf r=({g_v}/{h_v})_{v\in V}$, $\mathbf r=\mathbf r_1 \gplus{S,v} \mathbf r_2$, where $\mathbf r_1$ and $\mathbf r_2$ are similar vectors for $G_1$ and $G_2$. We will check that $Z_G(\mathbf r) = 0$. By maximality condition $Z_{G_1}({\mathbf r}_1) = 0$, $Z_{G_2}({\mathbf r}_2) = 0$. Applying the recurrence \eqref{eqn:indpolrec} (we take $K=S$ for~$G_1$ and $K=\{v\}$ for $G_2$) we obtain
\begin{equation}\label{eq:base-ZeroIndPol2}
\begin{aligned}
Z_{G_1}({\mathbf r}_1)&=Z_{G_1\setminus S}({\mathbf r}_1)- \sum_{u\in S} r_{1u}Z_{G_1\setminus N^+(u)}({\mathbf r_1})=0, 
\\
Z_{G_2}({\mathbf r}_2)&=Z_{G_2\setminus\{v\}}({\mathbf r}_2)- r_{\!2v}Z_{G_2\setminus N^+(v)}({\mathbf r}_2)=0.
\end{aligned}
\end{equation}
We write the recurrence \eqref{eqn:indpolrec} for $G$ taking $K=S$
\begin{align*}
Z_G(\mathbf r) &= Z_{G\setminus S}({\mathbf r})- \sum_{u\in S} r_{u}Z_{G\setminus N^+(u)}({\mathbf r}) 
\\&=
Z_{G_1\setminus S}({\mathbf r_1})Z_{G_2\setminus \{v\}}({\mathbf r_2}) - \sum_{u\in S} r_{1u}r_{2v}Z_{G_1\setminus N^+(u)}({\mathbf r_1})Z_{G_2\setminus N^+(v)}({\mathbf r_2}).
\end{align*}
Here the equality between the first and the second lines is based on the fact that the deletion of any vertex of $S$ splits graph $G$ into two components belonging to $G_1$ and $G_2$. Substituting \eqref{eq:base-ZeroIndPol2} in the last expression we obtain that $Z_G(\mathbf r)=0$.

Now we will prove that $Z_G(\mathbf x) >0$ for $\mathbf0\le \mathbf x\lneqq \mathbf r$. Let 
$\mathbf x=\mathbf x_1 \gplus{S,v} \mathbf x_2$, where $\mathbf0 \le \mathbf x_1\underset{(*)}{\le} \mathbf r_1$, $\mathbf0 \le \mathbf x_2\underset{(*)}{\le} \mathbf r_2$. 
Then for some vertex $v_0$ the sign of at least one of the inequalities ($*$) is strict.
Due to maximality of $\HG_1$ and $\HG_2$ the values $Z_{G_1\setminus S}({\mathbf x}_1)$, $Z_{G_1\setminus N^+(u)}({\mathbf x}_1)$, $Z_{G_2\setminus\{v\}}({\mathbf x}_2)$, $Z_{G_2\setminus N^+(v)}({\mathbf x}_2)$ are positive and instead of equalities \eqref{eq:base-ZeroIndPol2} we have the inequalities
$$
\begin{aligned}
Z_{G_1\setminus\{S\}}({\mathbf x}_1) &\ge \sum_{u\in S} x_{1u}Z_{G_1\setminus N^+(u)}({\mathbf x_1}), 
\\
Z_{G_2\setminus\{v\}}({\mathbf x}_2) &\ge x_{\!2v}Z_{G_2\setminus N^+(v)}({\mathbf x}_2).
\end{aligned}
$$
(for the graph $G_i$ that contains $v_0$ the sign of inequality is strict). Then
\begin{multline*}
Z_G(\mathbf x) = Z_{G_1\setminus S}({\mathbf x_1})Z_{G_2\setminus \{v\}}({\mathbf x_2}) 
- \\-
\sum_{u\in S} x_{1u}Z_{G_1\setminus N^+(u)}({\mathbf x_1})x_{2v}Z_{G_2\setminus N^+(v)}({\mathbf x_2})>0.
\end{multline*}
\end{proof}

\begin{corollary}\label{cor:hgn=mu}
Let the game $\HG=\langle G, \cnst{h} \rangle$ be obtained by a sequence of sum operations from a set of precise winning \hats games on complete graphs. Then $\hgn(G)=h$.
\end{corollary}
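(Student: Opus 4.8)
The plan is to combine the two complementary halves of the maximality machinery developed above: the combinatorial half (the theorem on the sum of games, Theorem~\ref{thm:sum-of games}) will yield the lower bound $\hgn(G)\ge h$, while the analytic half (maximality controls the sign of $Z_G$, together with the losing criterion of Bla\v{z}ej et al.) will yield the matching upper bound $\hgn(G)\le h$. The crucial observation is that by hypothesis $\HG$ is assembled by sum operations from precise winning games on complete graphs, and each such brick is \emph{already} maximal by the observation preceding Theorem~\ref{thm:product-maximality}.

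First I would argue the lower bound. Each building block $\langle K_{n_i}, h_i\rangle$ is winning: being precise on a clique with guessing function $\cnst1$ means $\sum_v 1/h_i(v)=1\ge 1$, so it is winning by Theorem~\ref{thm:clique-win}. Hence, applying Theorem~\ref{thm:sum-of games} repeatedly along the sequence of sum operations, the resulting game $\HG=\langle G,\cnst h\rangle$ is winning, and since it is the classic game with $h$ colors this gives $\hgn(G)\ge h$. Here I would also record a bookkeeping point: throughout the construction the guessing function stays $\cnst1$, because at a glued vertex the guessing values multiply as $1\cdot 1=1$; so the assembled game really is a classic \hats game and the associated vector is the constant vector $\mathbf r=(1/h)_{v\in V}$.

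Next, for the upper bound I would establish maximality of $\HG$ by induction on the number of sum operations: the base case is precisely the observation that a precise game on a complete graph is maximal, and the inductive step is Theorem~\ref{thm:product-maximality} (a sum of maximal games is maximal). Maximality supplies in particular condition~2), namely $Z_G(\mathbf x)>0$ for every $\mathbf 0\le \mathbf x\lneqq \mathbf r$ with $\mathbf r=(1/h)_{v\in V}$. Now take any integer $k>h$ and consider the classic game $\langle G,\cnst k\rangle$; its associated vector is the constant vector $\mathbf r'=(1/k)_{v\in V}$, and since $1/k<1/h$ in every coordinate we have $\mathbf 0\le \mathbf r'\lneqq \mathbf r$. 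Therefore $Z_G(\mathbf r')>0$, and by the losing criterion of \cite{blazej_bears_2021} (the first displayed fact: $Z_G(\mathbf r)>0$ forces a loss) the game $\langle G,\cnst k\rangle$ is losing. Thus no $k>h$ admits a winning strategy, so $\hgn(G)\le h$, and combined with the lower bound we conclude $\hgn(G)=h$.

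I expect the only genuinely delicate point to be bookkeeping rather than mathematics: one must verify that the hypothesis ``$\HG=\langle G,\cnst h\rangle$'' is consistent with the sum construction (the hatnesses at glued vertices multiply, so the hypothesis is really that all these products collapse to the single constant $h$), and that this constant $h$ is exactly the reciprocal of the coordinates of the vector $\mathbf r$ appearing in the maximality conditions. Once that identification is pinned down, the winning direction (from Theorem~\ref{thm:sum-of games}) and the losing direction (from maximality together with the $Z_G>0$ criterion) run entirely independently of one another, which is exactly what forces the conclusion to be the equality $\hgn(G)=h$ rather than a one-sided estimate.
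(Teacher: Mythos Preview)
Your argument is correct. The lower bound is obtained exactly as in the paper (iterating Theorem~\ref{thm:sum-of games} over precise winning bricks). For the upper bound, however, you take a more direct route than the paper. The paper observes that $G$, being assembled by sums of cliques, is chordal; it then uses maximality to identify $1/h$ as the smallest positive root of $U_G$, invokes the result $\hat\mu(G)=1/r$ for chordal graphs, and finishes with the general inequality $\hgn(G)\le\hat\mu(G)$. You instead apply maximality condition~2) directly: for any integer $k>h$ the constant vector $(1/k)_{v\in V}$ satisfies $\mathbf 0\le(1/k)\lneqq(1/h)$, hence $Z_G\bigl((1/k)\bigr)>0$, and the first Bla\v{z}ej et al.\ criterion immediately gives that $\langle G,\cnst k\rangle$ is losing. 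Your argument avoids chordality and the fractional hat chromatic number entirely; the paper's detour through $\hat\mu$ has the side benefit of establishing $\hat\mu(G)=h$ along the way.
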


\begin{proof}
We use theorem~\ref{thm:product-maximality} for $g_1\equiv 1$, $g_2\equiv 1$, $g\equiv 1$. Then we conclude by induction based on theorem~\ref{thm:product-maximality} that game $\HG$ is maximal. $\HG$ is winning by theorem~\ref{thm:sum-of games} on sum of games and therefore $\hgn(G)\ge h$. It is clear that $G$ is chordal graph. It follows from maximality condition that $1/h$ is the smallest positive root of $U_G(x)$. Therefore $h=\hat\mu(G)$. It remains to apply general inequality $\hgn(G)\le \hat\mu(G)$.
\end{proof}

\begin{corollary}\label{cor:hgn=mu-gen}
Let the game $\HG=\langle G, h, g \rangle$ be obtained by a sequence of sum operations from a set of precise winning games on complete graphs. Let $h/g$ be a constant function, $h/g=h_0\in\mathbb Q$. Then $\hat\mu(G)=h_0$.
\end{corollary}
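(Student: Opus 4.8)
The plan is to follow the template of the proof of Corollary~\ref{cor:hgn=mu}, but to compute $\hat\mu(G)$ directly rather than sandwiching $\hgn(G)$. First I would show that $\HG=\langle G,h,g\rangle$ is maximal. Each building block is a precise winning game on a complete graph, and by the observation preceding Theorem~\ref{thm:product-maximality} every such game is maximal: its signed independence polynomial equals $1-\sum_{v}x_v$, which satisfies conditions 1) and 2). Since $\HG$ is obtained from these blocks by a sequence of sum operations, repeated application of Theorem~\ref{thm:product-maximality} gives, by induction on the number of operations, that $\HG$ is maximal.

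Next I would record that $G$ is chordal. Complete graphs are chordal, and the $(S,v)$-sum amounts to blowing the vertex $v$ of $G_2$ up into the clique $S$ and then gluing the result to $G_1$ along the common clique $S$; both blow-up into a clique and clique-sum preserve chordality, so by induction $G$ is chordal. This is the analogue of the step that is declared ``clear'' in Corollary~\ref{cor:hgn=mu}, and it is the place that most deserves a careful check here, since the interface clique $S$ may now contain more than one vertex.

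The core of the argument is to read off the maximality condition along the diagonal. Put $\mathbf r=(g_v/h_v)_{v\in V}$. The hypothesis $h/g\equiv h_0$ says precisely that $\mathbf r=\tfrac1{h_0}\mathbf 1$ is the constant vector, which is exactly the diagonal point relevant to the monovariate polynomial: by definition $U_G(x)=Z_G(x\mathbf 1)$. Maximality condition 1) then gives $U_G(1/h_0)=Z_G(\mathbf r)=0$, and condition 2) applied to $\mathbf x=x\mathbf 1$ with $0\le x<1/h_0$ (so that $\mathbf 0\le x\mathbf 1\lneqq\mathbf r$) gives $U_G(x)>0$. Hence $U_G$ is positive on $[0,1/h_0)$ and vanishes at $1/h_0$, so $1/h_0$ is the smallest positive root of $U_G$.

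Finally I would invoke the third cited fact of Bla\v{z}ej et al.: for a chordal graph $G$ one has $\hat\mu(G)=1/r$, where $r$ is the smallest positive root of $U_G$. With $r=1/h_0$ this yields $\hat\mu(G)=h_0$. Note that, unlike in Corollary~\ref{cor:hgn=mu}, the winning of $\HG$ is not needed: for chordal graphs the fractional value is determined entirely by $U_G$. The only genuine obstacle is the chordality claim in the generality of the $(S,v)$-sum; once $\mathbf r$ is recognised as a constant vector — which is exactly where the hypothesis $h/g\equiv h_0$ is used — the rest follows routinely from maximality.
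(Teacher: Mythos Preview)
Your argument is correct and follows essentially the same route as the paper: establish maximality of $\HG$ by induction via Theorem~\ref{thm:product-maximality}, observe that $G$ is chordal, read off from maximality that $1/h_0$ is the smallest positive root of $U_G$, and conclude by the Bla\v{z}ej et al.\ result for chordal graphs. The paper's own proof compresses all of this into two sentences (in particular it simply asserts that ``in the proof of Theorem~\ref{thm:product-maximality} it was checked that $1/h_0$ is the minimal positive root of $U_G$''), whereas you spell out explicitly why the constant vector $\mathbf r=\tfrac1{h_0}\mathbf 1$ lets one pass from the multivariate maximality condition to the monovariate statement about $U_G$, and you also justify the chordality step for the general $(S,v)$-sum more carefully than the paper does.
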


\begin{proof}
It is clear that $G$ is chordal graph.  In fact in the proof of the theorem~\ref{thm:product-maximality} it was checked that $1/h_0$ is the minimal positive root of $U_G$. Then $\hat\mu(G)=h_0$ by \cite[Corollary 12]{blazej_bears_2021}.
\end{proof}

We remark that for generalized hat guessing games a ratio $h(A)/g(A)$ can be interpreted in the spirit of \cite{blazej_bears_2021} as a fractional hatness of sage $A$. 
It can happen that the number $h_0$ in corollary \ref{cor:hgn=mu-gen} is integer.
Unfortunately, we can not claim in this case that $\hgn(G)=h_0$.
See remark \ref{rem:mu(chain)>hgn}.

\section{Hat guessing number and the maximal degree}

It is well known from folklore that $\hgn(G)\leq e \Delta(G)$ (Lov\'asz Local Lemma,
 see e.\,g. \cite[Theorem 2.4, Remark 2.5]{Farnik2015})), but
no examples of graphs for which $\Delta(G)+1<\hgn(G)$ are known. We provide such examples here.

We start from a nice concrete graph.

\begin{lemma}\label{lem:deg6-hg8}
For graph $G$ depicted in fig.~\ref{fig:deg6-hg8} (on the right) we have $\Delta(G)=6$ and $\hgn(G)=8$.
\end{lemma}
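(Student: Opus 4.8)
The plan is to build the graph $G$ as an iterated sum of precise winning games on complete graphs and then invoke Corollary~\ref{cor:hgn=mu} to conclude $\hgn(G)=8$, while separately checking that the maximum degree of the resulting graph is $6$. The key advantage of this route is that Corollary~\ref{cor:hgn=mu} does all the hard work at once: it simultaneously gives the lower bound $\hgn(G)\ge 8$ (via Theorem~\ref{thm:sum-of games} on sum of games) and the matching upper bound $\hgn(G)\le\hat\mu(G)=8$ (via the smallest-root characterization for chordal graphs), provided we can realize $\langle G,\cnst 8\rangle$ as a sum of precise winning cliques. So the real content is combinatorial: exhibit such a decomposition whose summands are cliques $K_n$ carrying hatness functions $h$ that satisfy the precise winning condition $\sum_{v}1/h(v)=1$ from Theorem~\ref{thm:clique-win}.

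First I would identify the base bricks. By Theorem~\ref{thm:clique-win}, a \hats game $\langle K_n,h\rangle$ is precise winning exactly when $\sum_{v\in K_n}1/h(v)=1$. I would look for small cliques whose vertex hatnesses multiply out, under the gluing rule $h=h_1\gplus{S,v}h_2$, to the constant value $8$ at every vertex of the final graph. Recall that in a product $\HG_1\gtimes A\HG_2$ the glued vertex $A$ receives hatness $h_1(A)\cdot h_2(A)$, while all other vertices keep their original hatness; so to reach a uniform value $8$ one arranges the factors so that each vertex's hatness contributions multiply to $8$. A natural choice of bricks is triangles $K_3$ with hatnesses such as $(2,4,4)$ or $(2,3,6)$ (both satisfy $\sum 1/h=1$) together with edges $K_2$ with hatness $(2,2)$, since these are the precise winning cliques with the smallest vertex counts and hence contribute the fewest new neighbors per gluing.

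Next I would assemble these bricks by repeated sums so that the glued vertices accumulate the right product. The degree bookkeeping is what pins down the construction: each time a clique $S$ of $G_1$ is identified with a vertex $v$ of $G_2$, the vertices of $S$ acquire, as new neighbors, all the former neighbors of $v$ in $G_2$, so I must arrange the gluings so that no vertex ever collects more than $6$ neighbors in total, while every vertex still ends with hatness exactly $8$. After fixing the decomposition I would verify three things in order: (i) each summand is a precise winning clique, i.e.\ satisfies \eqref{eq:clique-win} with equality; (ii) the gluing of the hatness functions yields the constant $\cnst 8$ on every vertex of $G$; and (iii) $\Delta(G)=6$, by a direct count of neighbors at each vertex, confirming in particular that the top-degree vertices indeed reach $6$ and none exceeds it. Given (i)--(iii), Corollary~\ref{cor:hgn=mu} immediately delivers $\hgn(G)=8$, and since $8=\Delta(G)+2>\Delta(G)+1$, the lemma disproves Farnik's hypothesis.

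The main obstacle I anticipate is step (iii) interacting with the uniform-hatness requirement: it is easy to build a sum of precise cliques giving constant hatness $8$, and easy to build one with maximum degree $6$, but forcing both at once is a tight packing problem. Because each gluing raises the degree of the vertices in the glued clique by the external degree of the attachment point, keeping $\Delta=6$ forces the bricks to be glued at vertices (or small cliques) of low external degree, which in turn constrains which hatness values can sit where. I would expect the bulk of the verification to consist of exhibiting the explicit graph of fig.~\ref{fig:deg6-hg8}, labeling each vertex with its hatness and its component bricks, and then checking the three conditions against that labeled picture rather than reasoning abstractly.
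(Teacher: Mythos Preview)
Your plan is exactly the paper's approach: exhibit $\langle G,\cnst 8\rangle$ as an iterated product of precise winning clique games (verified via Theorem~\ref{thm:clique-win}), then invoke Corollary~\ref{cor:hgn=mu} for $\hgn(G)=8$, with $\Delta(G)=6$ read off directly. The only caveat is that your candidate brick $(2,3,6)$ cannot contribute to a constant hatness $8$ since $3\nmid 8$; the actual decomposition in fig.~\ref{fig:deg6-hg8} uses bricks whose hatnesses are powers of $2$, and the final step takes a threefold product of the intermediate graph to bring the remaining hatness-$2$ gluing vertex up to $8$.
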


\begin{proof}
  The games on small complete graphs in parentheses in fig~\ref{fig:deg6-hg8} on the left, are winning precise games by theorem \ref{thm:clique-win} (the value of hatness function is written near each vertex). We combine these graphs by theorem \ref{thm:multiplication} on game product, and after that once again multiply three copies of the obtained graph.
  
  We obtain graph $G$ for which the game $\langle G, \cnst 8 \rangle$ is winning.
By corollary \ref{cor:hgn=mu} \ $\hgn(G)= 8$.
\end{proof}

\begin{figure}[t]
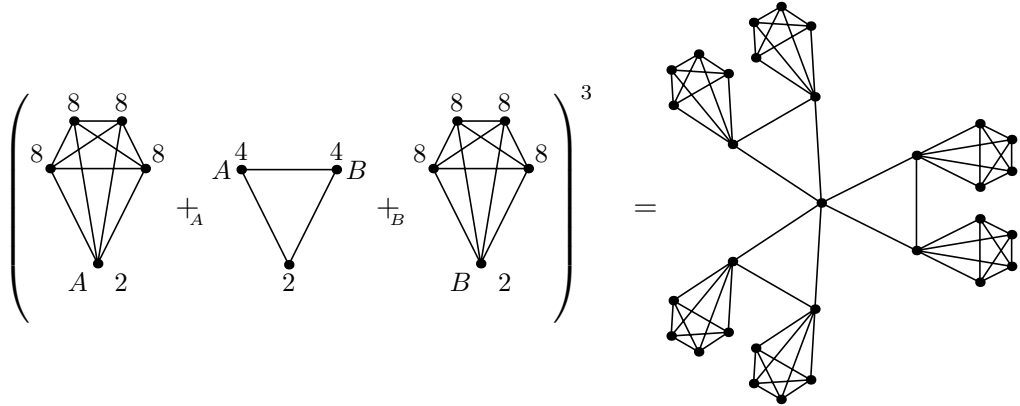

$$    
\left(\lower1cm\hbox{\epsfig{scale=1.2,file=planar41.mps}}
\gplus{A}
\lower1cm\hbox{\epsfig{scale=1.2,file=planar5.mps}}
\gplus{B}
\lower1cm\hbox{\epsfig{scale=1.2,file=planar42.mps}}\right)^3\quad=
\lower2.5cm\hbox{\epsfig{scale=1.2,file=planar3.mps}}
$$
    \caption{Graph $G$ for which $\Delta(G)=6$ and $\hgn(G)=8$.}
    \label{fig:deg6-hg8}
\end{figure}

\begin{lemma}

  \begin{enumerate}
  \item For any positive integer $k$ there exists a graph $G$ such that $\hgn(G)=\Delta(G) + k$.
  \item There exists a sequence of graphs $G_n$ such that $\Delta(G_n)\to +\infty$
    and $\lim\limits_{n \to +\infty} \hgn(G_n)/\Delta(G_n) = 8/7$.
  \end{enumerate}
\end{lemma}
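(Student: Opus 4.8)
The plan is to reduce both parts to a single construction and then invoke Corollary~\ref{cor:hgn=mu}, which computes the exact hat guessing number of any graph assembled from precise winning games on complete graphs by the sum operation: if the resulting game has constant hatness $h$, then $\hgn(G)=h$. So it suffices to produce, for suitable parameters, graphs of constant hatness whose maximum degree I can control. The building block is the graph $G_0$ of Lemma~\ref{lem:deg6-hg8}, which has constant hatness $8$ and $\Delta(G_0)=6$; crucially, its winning game was built from precise complete-graph games, so Corollary~\ref{cor:hgn=mu} applies to everything I derive from it.

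The single operation driving the argument is the substitution of a complete graph into every vertex at once, i.e.\ the clique blow-up. Fix an integer $p\ge 1$ and let $K_p$ carry the game $\langle K_p,\cnst p\rangle$, which is precise and winning by Theorem~\ref{thm:clique-win} since $p\cdot\frac1p=1$. Substituting this game into each vertex of $G_0$ in turn, via Corollary~\ref{thm:substitution}, I obtain a graph $G_0[K_p]$ together with a winning game on it, and I track two quantities. First the hatness: substitution multiplies the hatness of a replaced vertex by that of the inserted clique, so every new vertex gets hatness $p\cdot 8=8p$ and the game stays of constant hatness $8p$. Second the degree: a vertex of the $K_p$ replacing an original vertex $v$ is joined to the other $p-1$ vertices of its own block and to all $p$ vertices of the block replacing each neighbour of $v$, giving degree $(p-1)+p\deg_{G_0}(v)$; the maximum, attained when $\deg_{G_0}(v)=6$, is $\Delta(G_0[K_p])=7p-1$. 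Since each substitution is an $(S,v)$-sum with $S=V(K_p)$, Theorem~\ref{thm:product-maximality} keeps the game maximal, so $G_0[K_p]$ is again assembled from precise complete-graph games and Corollary~\ref{cor:hgn=mu} yields $\hgn(G_0[K_p])=8p$.

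Both parts then follow by choosing $p$. The gap is $\hgn-\Delta=8p-(7p-1)=p+1$ and the ratio is $\tfrac{8p}{7p-1}$. For part~(1), given $k\ge 2$ I set $p=k-1$ to obtain $\hgn(G_0[K_{k-1}])=\Delta(G_0[K_{k-1}])+k$ (the degenerate case $p=1$ returns $G_0$ itself, with gap $2$), while $k=1$ is witnessed by any complete graph $K_n$ with $\hgn(K_n)=n=\Delta(K_n)+1$. For part~(2), I let $G_n=G_0[K_n]$; then $\Delta(G_n)=7n-1\to+\infty$ and $\hgn(G_n)/\Delta(G_n)=\tfrac{8n}{7n-1}\to\tfrac87$.

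The steps needing genuine care, rather than routine arithmetic, are two. The first is justifying that substituting into \emph{every} vertex, not just one, still produces a game obtained by a sequence of sum operations from precise complete-graph games; this is exactly what lets me use Corollary~\ref{cor:hgn=mu} to get the value of $\hgn$ and not merely the fractional bound $\hat\mu$, and the distinction matters because, as the remark after Corollary~\ref{cor:hgn=mu-gen} cautions, equality of $\hat\mu$ and $\hgn$ is not automatic. I expect to handle this by performing the substitutions one vertex at a time, checking that each is an $(S,v)$-sum of maximal games and appealing to Theorem~\ref{thm:product-maximality} inductively. The second point is the degree bookkeeping in the blow-up: one must verify that blocks over adjacent vertices of $G_0$ are completely joined so that the formula $(p-1)+p\deg_{G_0}(v)$ is correct, and that its maximum is indeed realised at the degree-$6$ vertices of $G_0$.
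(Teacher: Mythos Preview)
Your proof is correct and follows essentially the same approach as the paper: both take the graph $G_0$ of Lemma~\ref{lem:deg6-hg8}, substitute $\langle K_p,\cnst p\rangle$ into every vertex to obtain a graph with constant hatness $8p$ and maximum degree $7p-1$, and invoke Corollary~\ref{cor:hgn=mu} via the observation that clique substitution is a special case of the sum operation. Your write-up is a bit more careful than the paper's in handling the case $k=1$ separately and in flagging that iterated substitutions must be checked to lie within the scope of Corollary~\ref{cor:hgn=mu}.
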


\begin{proof}
We take the graph from lemma \ref{lem:deg6-hg8} and substitute the games $\langle {K_n, \cnst{n}} \rangle$ in place of each vertex. The values of hatness function on the obtained graph $G_n$ are equal to $8n$, and $\Delta(G_n)= 7n-1$. By corollary \ref{thm:substitution} graph $G_n$ is winning. Since the substitution of a complete graph is a partial case of the sum operation, $\hgn(G_n)=8n$ by corollary~\ref{cor:hgn=mu}. 
\end{proof}

\begin{theorem}
  There exists a sequence of graphs $G_n$ such that $\Delta(G_n)\to +\infty$ and
  $\hgn(G_n)/\Delta(G_n)= 4/3$.
\end{theorem}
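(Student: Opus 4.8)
The plan is to mimic the strategy of Lemma~\ref{lem:deg6-hg8} and its substitution corollary, but starting from a more efficient ``brick'' that pushes the ratio $\hgn(G)/\Delta(G)$ from $8/7$ up to $4/3$. The key observation is that Corollary~\ref{cor:hgn=mu} reduces everything to building, via a sequence of sum (product) operations, a graph $G$ carrying the constant hatness game $\langle G,\cnst{h}\rangle$ out of precise winning \hats games on complete graphs; once this is done we automatically get $\hgn(G)=h$, so the entire task is combinatorial bookkeeping of the hatness values and the maximum degree.

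First I would search for a small precise winning clique configuration, assembled by products, that realizes a constant hatness $h$ on every vertex while keeping the maximum degree as small as possible relative to $h$. Concretely, by Theorem~\ref{thm:clique-win} a game $\langle K_n,\cnst{h}\rangle$ is precise winning exactly when $n/h=1$, i.e. $n=h$; more generally a clique whose hatness values $h(v)$ satisfy $\sum_v 1/h(v)=1$ is precise winning. The trick, exactly as in Lemma~\ref{lem:deg6-hg8}, is to glue several such cliques at shared vertices so that after raising all hatnesses to a common value $\cnst{h}$ (via the product construction with the $\langle K_m,\cnst m\rangle$ bricks) the degree of each vertex stays well below $h-1$. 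I would look for a gadget giving the ratio $4/3$ in the limit: a building block on which the common hatness is $h$ and the maximum degree is $\tfrac34 h$ asymptotically, e.g. realizing $h=4t$ and $\Delta=3t-1$ for a parameter $t$.

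Once such a gadget graph $G_0$ with $\langle G_0,\cnst h\rangle$ precise winning and $\Delta(G_0)$ close to $\tfrac34 h$ is found, the final step is the substitution trick from the previous lemma: substitute $\langle K_n,\cnst n\rangle$ in place of each vertex using Corollary~\ref{thm:substitution}. This multiplies all hatness values by $n$, giving constant hatness $\cnst{hn}$, while the resulting maximum degree becomes $(\text{internal clique degree})+(\text{inherited neighbors})$, which works out to roughly $\tfrac34(hn)$. Taking $n\to\infty$ (or a suitable sequence of parameters) drives $\Delta(G_n)\to\infty$ while forcing the ratio to the exact value $4/3$. Because substitution of a complete graph is a special case of the sum operation, Corollary~\ref{cor:hgn=mu} applies verbatim to conclude $\hgn(G_n)=hn$, so I never need to analyze strategies directly.

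The main obstacle will be the first step: exhibiting the right precise-winning clique gadget that achieves the $4/3$ ratio exactly rather than merely approaching some suboptimal value like the $8/7$ of the preceding lemma. This requires solving $\sum_v 1/h(v)=1$ over cliques glued at shared vertices so that, after equalizing hatnesses, the worst-case degree is as small as possible per unit of hatness; getting the constant to be precisely $3/4$ (and not, say, $5/6$) is a genuine optimization that must be done by hand, and I would expect to verify it by writing out the independence-polynomial recurrence~\eqref{eqn:indpolrec} on the candidate gadget to confirm both the precise-winning condition and that each vertex's degree is exactly $\tfrac34 h - O(1)$. Everything after that—applying substitution and taking the limit—is routine and inherited from the machinery already established in Corollary~\ref{cor:hgn=mu} and Corollary~\ref{thm:substitution}.
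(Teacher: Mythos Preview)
Your framework is right — Corollary~\ref{cor:hgn=mu} is exactly the tool to use, and the task really is purely combinatorial once you assemble a constant-hatness game from precise clique bricks. But the plan you sketch (find one fixed gadget, then substitute $\langle K_n,\cnst n\rangle$ into every vertex, as in the $8/7$ lemma) cannot deliver the \emph{exact} ratio $4/3$ that the theorem asserts. When you substitute $K_n$ into every vertex of a gadget with $\hgn=h_0$ and $\Delta=d_0$, the new hatness is $h_0n$ while each vertex now has degree $(n-1)+d_0n=(d_0+1)n-1$. The ratio becomes $h_0n/((d_0+1)n-1)$, which tends to $h_0/(d_0+1)$ but is never equal to it for $n>1$, and is strictly smaller than the gadget's own ratio $h_0/d_0$. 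This is precisely why the preceding lemma is stated with $\lim=8/7$, whereas the present theorem demands equality. So even if you did locate a gadget with ratio exactly $4/3$, the substitution step would immediately spoil it; and you acknowledge yourself that you have not yet found the gadget.

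The paper's construction avoids substitution entirely and instead builds, for each $n\ge3$, a layered graph $\tilde G_n$ directly as a product of precise clique games $T_k=\langle K_{2^{n-k}+1},\cdot\rangle$. The vertex $A_k$ on level $k$ serves as the hatness-$2$ ``bottom'' vertex of $k$ copies of $T_k$ and as a hatness-$2^{n-k}$ ``top'' vertex in one copy of $T_{k+1}$; multiplying these hatnesses gives $h(A_k)=2^k\cdot 2^{n-k}=2^n$ uniformly, while $\deg A_k=k\cdot 2^{n-k}+2^{n-k-1}$ is maximized at $k=1$ with value $\tfrac34\cdot 2^n$. After gluing $n$ copies at $A_{n-1}$ to bring its hatness up to $2^n$ as well, Corollary~\ref{cor:hgn=mu} gives $\hgn(G_n)=2^n$ and hence $\hgn(G_n)/\Delta(G_n)=4/3$ on the nose, with $\Delta(G_n)\to\infty$. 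The essential idea you are missing is this balancing act: at each level the clique sizes halve while the number of cliques meeting the vertex grows linearly, so every vertex ends up with the \emph{same} hatness but the degrees stay at $\tfrac34$ of it. That is the content of the proof, and it does not emerge from the single-gadget-plus-substitution template.
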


\begin{proof}
  Let us fix some integer $n\ge 3$. Denote by $T_k$ for $k=1$, \dots, $n - 1$ the precise game on graph $K_{2^{n-k} + 1}$ pictured in fig.~\ref{fig:Tk}.

\begin{figure}[h]%
\begin{center}
\epsfig{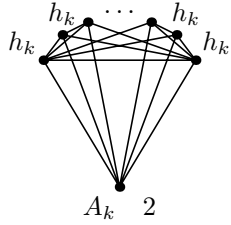}\hfill
\end{center}
\caption{Precise game $T_k$ on $K_{2^{n-k}+1}$.  The hatness of the $2^{n-k}$ top vertices is $h_k=2^{n-k+1}$, the bottom 
  vertex $A_k$ has hatness 2.}%
\label{fig:Tk}%
\end{figure}

Let us construct graph $G_n$. We start from graph $\tilde{G}_n$, which
is an essential part of the construction. The vertices of graph $\tilde{G}_n$ are
located on $n$ levels, enumerated from $0$ to $n-1$ in downward direction. To describe the
graph consider a vertex $A_k$, $0 \leq k\leq n-1$, on the $k$-th level (see fig.~\ref{fig:4/3} for $n=5$).

\begin{figure}[h]%
\begin{center}
\epsfig{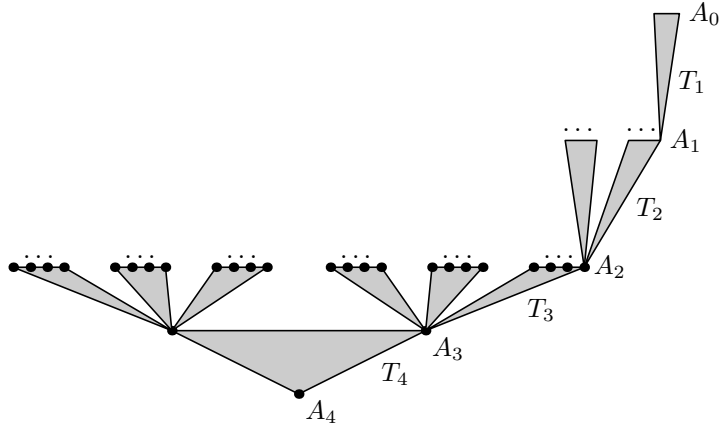}\hfill
\end{center}
\caption{Scary example $\tilde{G}_n$, $n=5$}%
\label{fig:4/3}%
\end{figure}

The vertex $A_{n-1}$ is on the lowest level, $\deg A_{n-1}=2$, $h(A_{n-1})=2$.

Let us describe the vertices $A_k$, $0\leq k\leq n-2$. The vertex
$A_k$ belongs to clique $T_{k+1}$, which bottom vertex lies on the $(k+1)$-th
level, and the top vertices on the $k$-th level are adjacent to $A_k$. Thus
there are $2^{n-k-1}$ edges from the vertex $A_k$ to vertices of $T_{k+1}$. Moreover, the vertex $A_k$ is also a bottom vertex in $k$ cliques of kind $T_k$. The other vertices of these cliques are located on the $(k-1)$-th level, so there are $k\cdot 2^{n-k}$ edges from $A_k$ to the vertices of these cliques. In total $\deg A_k = k\cdot 2^{n-k}+2^{n-k-1}$. Note that 
$$
\max\deg A_k = \max\{k\cdot 2^{n-k}+2^{n-k-1}, k= 0, 1, \dots, n-2\}
$$
is reached for $k=1$ and equals $2^{n-1}+2^{n-2}=3/4 \cdot 2^n$.

Graph $\tilde{G}_n$ is constructed by theorem \ref{thm:product-maximality} on game product as a product of the cliques $T_k$ mentioned above. In these products the vertex $A_k$ plays the role of the bottom vertex of $T_{k}$ $k$ times (with hatness $2$), and it plays the role of the top vertex of $T_{k+1}$ once (with hatness $2^{n-k}$).  Thus $h(A_k)=2^{n-k}\cdot2^k=2^n$.

Now let graph $G_n$ be a product of $n$ copies of graph $\tilde{G}_n$ by vertex $A_{n-1}$. 
Then $\deg A_{n-1} =2n$,  $h(A_{n-1})=2^n$, the degrees and hatnesses of the other
vertices remain unchanged. Since the hatness function is constant on the obtained graph, by corollary \ref{cor:hgn=mu} $\hgn(G_n)=2^n$ and the following equality holds.
$$
\frac{\hgn(G_n)}{\Delta(G_n)}=\frac{2^n}{\frac34 2^n}  = \frac43.
$$
\end{proof}

\section{Independence polynomials of clique extensions}
\label{sec:IPofCE}

\paragraph{Clique extension of the first kind}
Let $G=\langle {V, E} \rangle$ be a graph, where $V=\{v_1, \dots, v_n\}$, and
let $K_{a_1}$, \ldots, $K_{a_n}$ be a set of complete graphs with one marked
vertex $v_i$ in each of them. \emph{The clique extension of the first kind} of the graph G is the graph
$$
G\gplus{v_1}K_{a_1}\gplus{v_2}K_{a_2}\gplus{v_3}\ldots\gplus{v_n}K_{a_n},
$$
denote it by $G_n=G(a_1, a_2, \dots, a_n)$, see fig.~\ref{fig:clq-ext-example}.

\begin{figure}
\begin{center}
\epsfig{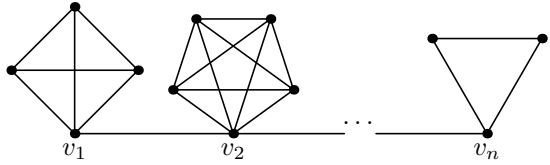}
\end{center}
    \caption{Graph $G(4, 5, \dots, 3)$ is a clique extension of the first kind of path  $P_n$.}
    \label{fig:clq-ext-example}
\end{figure}

The univariate alternating independence polynomial $U_{G_n}(x)$ of graph $G_n$ is obtained from $P_{G_n}(\mathbf{x})$ by identifying all the variables $x_v$ with new variable~$x$. Observe  that any anticlique in the graph $G_n$ contains at most one vertex from every clique $K_{a_j}$. Let us ``partially'' identify the variables $x_v$: for each $i$ let $x_i$ be a new variable corresponding to clique $K_{a_i}$, put $x_v=x_i$ in $P_{G_n}(\mathbf{x})$  for all vertices $v$ of clique $K_{a_i}$. Denote the obtained \emph{reduced} independence polynomial by $\widetilde{P}_{G_n}(\mathbf{x})$, $\mathbf{x}=(x_1, \dots, x_n)$.

In the following theorem we give an analogue of the recurrence relation \eqref{eqn:indpolrec} for reduced independence polynomials.

\begin{theorem}\label{thm:cl-ext-1}

  Let $d=\deg v_n$ and $v_{n-1}, \dots, v_{n-d}$ be the neighbors of the vertex
  $v_n$ in graph $G$. Then the independence polynomial $\widetilde{P}_{G_n}$
  satisfies the equation
  \begin{multline}\label{eqn:PGn-recurr}
    \widetilde{P}_{G_n}(\mathbf x)=
    \widetilde{P}_{G_{n-1}}(\mathbf x) + x_{n}(a_n-1)\cdot \widetilde{P}_{G_{n-1}}(\mathbf x)
\\
+\widetilde{P}_{G_{n-d-1}}(\mathbf x)\cdot x_{n}\cdot \prod_{j=1}^d x_{n-j}\bigl(a_{n-j}-1 +\tfrac 1{x_{n-j}}\bigr).
  \end{multline}
\end{theorem}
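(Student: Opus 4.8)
The plan is to derive the recurrence directly from the general recurrence \eqref{eqn:indpolrec}, applied to $G_n$ with the clique $K=K_{a_n}$ (the complete graph attached at $v_n$, which is genuinely a clique of $G_n$), and then to carry out the variable reduction. Since the reduction — identifying all variables of each clique $K_{a_i}$ with a single $x_i$ — is just a substitution, it commutes with \eqref{eqn:indpolrec}, so it suffices to understand the three graphs $G_n\setminus K_{a_n}$ and $G_n\setminus N^+(u)$, $u\in K_{a_n}$, that appear on the right-hand side.

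First I would dispose of the easy terms. Deleting the whole clique $K_{a_n}$ (which consists of $v_n$ together with its $a_n-1$ satellite vertices) removes $v_n$ from $G$ and leaves exactly the clique extension $G_{n-1}$ of $G[\{v_1,\dots,v_{n-1}\}]$; hence $\widetilde P_{G_n\setminus K_{a_n}}=\widetilde P_{G_{n-1}}$. Each of the $a_n-1$ satellites $u$ satisfies $N^+(u)=K_{a_n}$, so $G_n\setminus N^+(u)=G_n\setminus K_{a_n}=G_{n-1}$ as well, and after reduction $x_u=x_n$. These $a_n-1$ identical contributions, together with the first term, account for $(1+(a_n-1)x_n)\widetilde P_{G_{n-1}}$, i.e. the first two summands of \eqref{eqn:PGn-recurr}.

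The crux is the term for $u=v_n$. Here $N^+(v_n)$ consists of $v_n$, its $a_n-1$ satellites, and the neighbors $v_{n-1},\dots,v_{n-d}$ of $v_n$ in $G$ — but \emph{not} the satellites of these neighbors, since each such satellite is adjacent only to its own centre. Consequently $G_n\setminus N^+(v_n)$ splits as a disjoint union of the clique extension $G_{n-d-1}$ of $G[\{v_1,\dots,v_{n-d-1}\}]$ (using the labelling $G\setminus N^+(v_n)=G[\{v_1,\dots,v_{n-d-1}\}]$) and $d$ isolated leftover cliques $K_{a_{n-j}-1}$, the surviving satellites of the deleted neighbors. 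Using multiplicativity of the independence polynomial over disjoint components, and noting that an isolated clique $K_{a_{n-j}-1}$ with common variable $x_{n-j}$ has reduced polynomial $1+(a_{n-j}-1)x_{n-j}=x_{n-j}(a_{n-j}-1+\tfrac1{x_{n-j}})$, the $v_n$-term becomes exactly the third summand of \eqref{eqn:PGn-recurr}. Summing the three contributions finishes the proof.

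The main obstacle is the correct bookkeeping of $G_n\setminus N^+(v_n)$: one must observe that deleting a neighbor $v_{n-j}$ does not delete its satellite vertices, so they persist as an isolated clique rather than disappearing. This is precisely the observation that produces the factors $x_{n-j}(a_{n-j}-1+\tfrac1{x_{n-j}})$; overlooking it would wrongly yield $\widetilde P_{G_{n-d-1}}$ alone. As an independent cross-check I would verify the identity via the closed form $\widetilde P_{G_n}(\mathbf x)=\sum_{I}\prod_{i\in I}x_i\prod_{i\notin I}(1+(a_i-1)x_i)$, the sum ranging over the independent sets $I$ of $G$, by splitting according to whether $v_n\in I$; this states that each site is either empty, carries a satellite, or carries its centre, with the sole constraint that no two adjacent centres are chosen.
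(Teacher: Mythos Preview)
Your proof is correct and follows essentially the same route as the paper: both arguments case-split on which vertex of $K_{a_n}$ (if any) lies in the independent set, the only difference being that you invoke the ready-made recurrence~\eqref{eqn:indpolrec} with $K=K_{a_n}$ whereas the paper redoes that case analysis directly. Your identification of the leftover isolated cliques $K_{a_{n-j}-1}$ after deleting $N^+(v_n)$ is exactly the key observation, and the closed-form cross-check you propose is a valid alternative derivation.
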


\begin{proof}
The left hand side counts independent sets of $k$ vertices in graph $G_n$. Let us construct by induction on $n$ a one to one correspondence between these sets and monomials of degree $k$ in the right hand side of the equation. Let $\Omega$ be an independent set of size $k$. The following three cases are possible.

  \begin{enumerate}
  \item $\Omega \cap V(K_{a_n})=\varnothing$. Then we can put into correspondence to $\Omega$ one of $k$-th degree terms in the first summand $\widetilde{P}_{G_{n-1}}(\mathbf x)$ of~\eqref{eqn:PGn-recurr}.

  \item $\Omega \cap V(K_{a_n})=\{v\}$, $v\ne v_n$ (recall that vertex $v$ corresponds to variable $x_n$ in the reduced independence polynomial). We can choose vertex $v$ in $a_n-1$ ways, and the other vertices of~$\Omega$ are chosen in $G_{n-1}$, this choice is counted by $(k - 1)$-th degree terms of $\widetilde{P}_{G_{n-1}}(\mathbf{x})$. In this way we obtain the second summand of formula~\eqref{eqn:PGn-recurr}.

  \item $v_n\in \Omega$. Then vertices $v_{n-j}$,
    $1\leq j\leq d$, do not belong to $\Omega$, but $\Omega$ can contain any other vertices from cliques $K_{a_{n-j}}$. Expanding the product $x_v\prod\limits_{j=1}^d x_{n-j}(a_{n-j}-1 +\tfrac 1{x_{n-j}})$ we see that every $m$-th degree term, $1\leq m\leq d$, determines a way of choosing $m$ elements to the set~$\Omega$. The choice of the other $k-m$ elements is encoded in  $\widetilde{P}_{G_{n-d-1}}(\mathbf{x})$. This calculation is hidden in the third summand of formula~\eqref{eqn:PGn-recurr}.
  \end{enumerate}
 \end{proof}

Rewrite the relation~\eqref{eqn:PGn-recurr} by uniting first two summands. 
\begin{multline}\label{eqn:PGn-recurr-short}
  \widetilde{P}_{G_n}(\mathbf x) = x_{n}   \bigl(a_n-1+\tfrac1{x_n}\bigr)\cdot \widetilde{P}_{G_{n-1}}(\mathbf x)
  +\\+
  \widetilde{P}_{G_{n-d-1}}(\mathbf x)\cdot x_{n}\cdot \prod_{j=1}^d x_{n-j}\bigl(a_{n-j}-1 +\tfrac 1{x_{n-j}}\bigr).
\end{multline}

Let for a moment $G(a_1, a_2, \dots, a_n)$ be a non degenerate clique extension in a sense that all $a_i>1$.
Let $f_n(a_1, a_2, \dots, a_n)$ be the leading coefficient of polynomial
$\widetilde{P}_{G_n}$, i.\,e.\ the number of ways to choose an independent set with
$n$ vertices in the graph $G(a_1, a_2, \dots, a_n)$. Denote 
$$
f_n=f_n(a_1, a_2, \dots, a_n)
$$
for short. It is obvious from combinatorial point of view and immediately follows from~\eqref{eqn:PGn-recurr-short} that sequence $f_n$ satisfies the recurrence relation
\begin{equation}\label{eqn:fG-recurrence}
  f_n = (a_n - 1)f_{n-1} + f_{n-d-1}\cdot\prod_{j=1}^d (a_{n-j} - 1).
\end{equation}
Hence $f_n$ is a polynomial of $n$-th degree in variables $a_1$, $a_2$, \dots, $a_n$, which has degree 1 with respect to each variable $a_i$. Thus, we may assign an arbitrary real values for the variables $a_i$ in this polynomial.

\begin{corollary}\label{cor-UGn}
  \begin{gather}\label{eqn:cor-UGn-upd}
    \widetilde{P}_{G_n}(\mathbf x) = x_1x_2\dots x_n  \cdot f_n\bigl(a_1+\tfrac1{x_1}, a_2+\tfrac 1{x_2}, \dots, a_n+\tfrac 1{x_n}\bigr),\\
    \label{eqn:cor-UGn}
    U_{G_n}(x) =(-x)^n \cdot f_n\bigl(a_1-\tfrac 1x, a_2-\tfrac 1x, \dots, a_n-\tfrac 1x\bigr).
  \end{gather}
\end{corollary}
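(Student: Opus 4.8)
The plan is to establish the multivariate identity \eqref{eqn:cor-UGn-upd} by strong induction on $n$, by showing that its right-hand side obeys the very recurrence \eqref{eqn:PGn-recurr-short} that characterizes $\widetilde{P}_{G_n}$, and then to read off \eqref{eqn:cor-UGn} as a one-variable specialization.

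First I would set $F_m := f_m\bigl(a_1+\tfrac1{x_1}, \dots, a_m+\tfrac1{x_m}\bigr)$ and abbreviate the right-hand side of \eqref{eqn:cor-UGn-upd} as $g_n := x_1\cdots x_n\,F_n$; the goal is $g_n=\widetilde{P}_{G_n}$. The key preliminary point is that \eqref{eqn:fG-recurrence} is a polynomial identity in $a_1,\dots,a_n$, not merely a numerical one --- this is exactly the multilinearity of $f_n$ recorded in the text --- so I am entitled to substitute the rational quantities $a_i\mapsto a_i+\tfrac1{x_i}$ and keep a valid relation
\[
F_n=\bigl(a_n-1+\tfrac1{x_n}\bigr)F_{n-1}+F_{n-d-1}\prod_{j=1}^d\bigl(a_{n-j}-1+\tfrac1{x_{n-j}}\bigr).
\]

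Next I would multiply through by $x_1\cdots x_n$ and redistribute the prefactor. The first term becomes $x_n\bigl(a_n-1+\tfrac1{x_n}\bigr)g_{n-1}$. For the second, since $v_{n-1},\dots,v_{n-d}$ are precisely the neighbors of $v_n$, I factor $x_1\cdots x_n=(x_1\cdots x_{n-d-1})\cdot x_n\prod_{j=1}^d x_{n-j}$, which turns that term into $g_{n-d-1}\cdot x_n\prod_{j=1}^d x_{n-j}\bigl(a_{n-j}-1+\tfrac1{x_{n-j}}\bigr)$. Thus $g_n$ satisfies \eqref{eqn:PGn-recurr-short} verbatim with $g$ in place of $\widetilde{P}_{G}$. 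Since $g_0=1=\widetilde{P}_{G_0}$, and $d=\deg v_n\le n-1$ guarantees $n-d-1\ge 0$ so that both back-references land at indices already handled, strong induction yields $g_n=\widetilde{P}_{G_n}$, which is \eqref{eqn:cor-UGn-upd}.

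Finally, \eqref{eqn:cor-UGn} follows at once: $U_{G_n}(x)$ is obtained from $P_{G_n}$ by plugging $-x$ for every vertex variable, which factors through $\widetilde{P}_{G_n}$ as the single substitution $x_1=\dots=x_n=-x$. Applying it to \eqref{eqn:cor-UGn-upd} replaces $x_1\cdots x_n$ by $(-x)^n$ and each argument $a_i+\tfrac1{x_i}$ by $a_i-\tfrac1x$. I expect the only delicate points to be the justification that \eqref{eqn:fG-recurrence} may be evaluated at the non-integer arguments $a_i+\tfrac1{x_i}$ (handled by multilinearity of $f_n$) and the bookkeeping that aligns the monomial $x_1\cdots x_n$ with the neighborhood of $v_n$; everything else is a direct comparison of the two recurrences.
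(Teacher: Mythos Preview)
Your argument is correct and follows essentially the same route as the paper: substitute $a_j\mapsto a_j+\tfrac1{x_j}$ into the polynomial identity \eqref{eqn:fG-recurrence}, multiply through by $x_1\cdots x_n$ to recover \eqref{eqn:PGn-recurr-short}, match initial data, and then specialize $x_j=-x$ for \eqref{eqn:cor-UGn}. The only cosmetic difference is that the paper verifies the one-vertex and one-edge cases directly as its base, whereas you start from the convention $g_0=\widetilde{P}_{G_0}=1$; both are equivalent anchors for the same induction.
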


\begin{proof}
  For the clique extensions of the one-vertex graph $G$ or for graph $G$
  consisting of one edge, formula~\eqref{eqn:cor-UGn-upd} can be checked
  directly. If we substitute $a_j := a_j + 1/x_j$ for all $j$,
  multiply both sides by $x_1x_2\dots x_n$ and then denote 
  $x_1x_2\dots x_kf_k$ by $\widetilde{P}_{G_k}$, we obtain
  formula~\eqref{eqn:PGn-recurr-short}. Thus, the left and the right hand sides
  of~\eqref{eqn:cor-UGn-upd} obey the same initial conditions and the
  same recurrence relations.

  After the substitution $x_j := -x$ into~\eqref{eqn:cor-UGn-upd}, we get~\eqref{eqn:cor-UGn}.
\end{proof} 

\begin{theorem}\label{thm:indpol}
  Let $G$ be an arbitrary graph, $G(a_1, a_2, \dots, a_n)$ $(a_i > 1)$ be its clique
  extension, and $f_n=f_n(a_1, a_2, \dots, a_n)$ be its polynomial function. Then
  \begin{equation}\label{eqn:indpol}
    U_{G}(x) =(-x)^n \cdot f_n\bigl(1-\tfrac 1x, 1-\tfrac 1x, \dots, 1-\tfrac 1x\bigr).
  \end{equation}
\end{theorem}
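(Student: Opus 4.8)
The plan is to recognize the bare graph $G$ as the \emph{degenerate} clique extension $G(1,1,\dots,1)$ and then to read off \eqref{eqn:indpol} by specialising the formula of Corollary~\ref{cor-UGn} at $a_1=\dots=a_n=1$. Indeed, if every attached clique $K_{a_i}$ is a single vertex ($a_i=1$), then gluing $K_1$ to $G$ at $v_i$ via $\gplus{v_i}$ adds nothing, so $G(1,\dots,1)=G$ and consequently $U_{G(1,\dots,1)}(x)=U_G(x)$. Corollary~\ref{cor-UGn} supplies, for the clique extension $G_n=G(a_1,\dots,a_n)$, the identity
\[
U_{G_n}(x)=(-x)^n\cdot f_n\bigl(a_1-\tfrac1x,\dots,a_n-\tfrac1x\bigr),
\]
and substituting $a_i=1$ would produce exactly the right hand side of \eqref{eqn:indpol}. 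Thus the entire content of the theorem reduces to legitimising the passage to $a_i=1$.

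The point that needs care is that Corollary~\ref{cor-UGn} and the combinatorial reading of $f_n$ as a count of $n$-element independent sets were obtained under the non-degeneracy assumption $a_i>1$, whereas \eqref{eqn:indpol} evaluates $f_n$ at the degenerate value $1$. This is harmless because, as observed just after \eqref{eqn:fG-recurrence}, $f_n$ is a genuine polynomial in $a_1,\dots,a_n$, of degree $1$ in each variable, so $f_n(1-\tfrac1x,\dots,1-\tfrac1x)$ is perfectly well defined irrespective of any combinatorial interpretation. Moreover the underlying identity \eqref{eqn:cor-UGn-upd} is, for each fixed $\mathbf x$, an equality of two polynomials in the variables $a_1,\dots,a_n$: the left hand side because the coefficients of $\widetilde{P}_{G_n}(\mathbf x)$ are polynomial in the $a_i$, and the right hand side manifestly so. Two polynomials that agree on the open box $(1,\infty)^n$ agree everywhere, hence the identity persists at $a_1=\dots=a_n=1$; the monovariate specialisation $x_j:=-x$ then transports this to \eqref{eqn:cor-UGn}.

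Putting these observations together, I would evaluate \eqref{eqn:cor-UGn} at $a_1=\dots=a_n=1$: the left hand side becomes $U_{G(1,\dots,1)}(x)=U_G(x)$, while the right hand side becomes $(-x)^n f_n(1-\tfrac1x,\dots,1-\tfrac1x)$, which is precisely \eqref{eqn:indpol}. The only genuine obstacle is the justification described above, namely that the clique-extension formula, established combinatorially only for $a_i>1$, survives the specialisation to the degenerate value $a_i=1$; once this is granted by the polynomial-identity argument, the theorem follows at once. Alternatively, one could sidestep the closure step entirely by noting that the inductive proof of Corollary~\ref{cor-UGn}, carried out through the recurrence \eqref{eqn:PGn-recurr-short} whose ingredients are all polynomial in the $a_i$, is from the outset an identity of polynomials in $a_1,\dots,a_n$ and therefore already valid at $a_i=1$.
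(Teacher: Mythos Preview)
Your proposal is correct and follows essentially the same route as the paper: both recognise $G$ as the degenerate extension $G(1,\dots,1)$ and then justify specialising Corollary~\ref{cor-UGn} to $a_i=1$. The paper argues this by observing that the recurrences \eqref{eqn:PGn-recurr} and \eqref{eqn:fG-recurrence} remain literally valid when some $a_j=1$ (the corresponding terms simply vanish), whereas you invoke the polynomial-identity principle; your final ``alternatively'' sentence is in fact exactly the paper's argument.
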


\begin{proof}
  Formulae~\eqref{eqn:PGn-recurr} and~\eqref{eqn:fG-recurrence} have important property: when we subsequently apply these formulas for fixed graph $G$ in order to derive the explicit formula for polynomials   $\widetilde{P}_{G_n}$ or $f_n$, the number of steps in this procedure  
does not depend on values $a_j$. Moreover, the formulae remain correct if some (or even all)  parameters $a_j$ are set to $1$. For example, for $a_n=1$ the second summand of~\eqref{eqn:PGn-recurr} is equal to $0$, that means, that for $a_n=1$ it is impossible to include into an independent set the vertex of clique $K_{a_n}$ which is not equal $v_n$. 
Thus the statement of corollary~\ref{cor-UGn} is valid if we substitute  $a_1=a_2=\ldots=a_n=1$ in~\eqref{eqn:cor-UGn}.
\end{proof}

\paragraph{Clique extension of the second kind}

Let $H=\langle {V, E} \rangle$ be a graph in which $V=\{v_1, \dots, v_n\}$, and
$K_{a_1}$, \ldots, $K_{a_n}$ be a set of complete graphs, where $a_j\geq \deg v_j$ and
$\deg v_j$ vertices are marked in each $K_{a_j}$. Let us consider a
graph $\bigcup\limits_{j=1}^n K_{a_j}$ and add several bridges between its components: 
for every edge $u_iu_j\in E(H)$ draw an edge connecting a marked vertex of $K_{a_i}$ with a marked vertex of $K_{a_j}$ (each marked vertex must be a vertex of one bridge only). The obtained graph is called \emph{a clique extension of the second kind} of graph $H$ (see fig.~\ref{fig:clique-chain-ex3}). Denote it by
$$
H_n=H(a_1, a_2, \dots, a_n).
$$

\begin{figure}[h]    
  \centering
  \begin{tikzpicture}[scale=.7]\footnotesize
    \foreach \shift/\size in {0/1, 3/2, 6/3, 12/{n-1}, 15/n} {
      \begin{scope}[shift={(\shift, 0)}]
        \draw (-1, 0) -- (0, 1) -- (1, 0) -- (0, -1) -- cycle;
        \draw node {$K_{a_{\size}}$};
      \end{scope}
    }
    \draw (9, 0) node {$\cdots$};
    \foreach \shift in {1, 4, 7, 10, 13} {
      \begin{scope}[shift={(\shift, 0)}]
        \draw (0, 0) -- (1, 0);
      \end{scope}
    }
  \end{tikzpicture}
\caption{Graph $H_n$ is a clique extension of the second kind of $H=P_n$. Each rhombus denotes a clique, which size is written in the center. Bridges are not incident.}
\label{fig:clique-chain-ex3}
\end{figure}
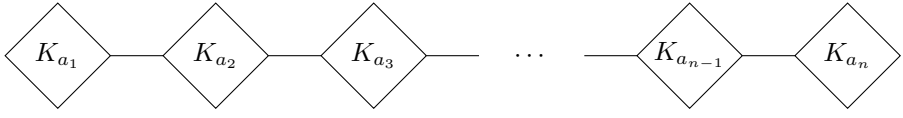

Let $H_n=H(a_1, a_2, \dots, a_n)$ be a clique extension of the second kind of graph $H$. \emph{Reduced} independence polynomial  $P_{H_n}(\mathbf{x})$, $\mathbf{x}=(x_1, \dots, x_n)$ is a polynomial obtained by plugging $x_j$ for each variable $v_v$,  $v\in K_{a_j}$, $j=1, \dots, n$.
As for extensions of the first kind, denote by $f_n=f_n(a_1, a_2, \dots, a_n)$ the leading coefficient of $\widetilde{P}_{H_n}$, i.\,e.\ the number of ways to choose an independent set with $n$ vertices in graph $H_n$.

\begin{theorem} Let $d=\deg v_n$ and let $v_{n-1}, \dots, v_{n-d}$ be the neighbors of vertex $v_n$ in graph $H$. For every $j$, $1\leq j\leq d$, denote by $H_{n-1}^{j}$ the graph $H(a_1, a_2, \dots, a_{n-j}-1, \dots, a_{n-1})$ which is obtained from $H_n$ by removing of clique $K_{a_n}$ and the bridge between $K_{a_n}$ and $K_{a_{n-j}}$ (including its endpoint in $K_{a_{n-j}}$). Then

1) $\widetilde{P}_{H_n}(\mathbf x)=
x_{n}\bigl(a_n-d+\tfrac1{x_n}\bigr)\cdot \widetilde{P}_{H_{n-1}}(\mathbf x)+
x_{n} \sum\limits_{j=1}^d \widetilde{P}_{H_{n-1}^{j}}(\mathbf x)$;

2) $U_{H_n}(x) =(-x)^n \cdot f_n\bigl(a_1-\tfrac 1x, a_2-\tfrac 1x, \dots, a_n-\tfrac 1x\bigr).$
\end{theorem}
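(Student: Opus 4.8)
The plan is to mirror the development already carried out for clique extensions of the first kind: prove the combinatorial recurrence in part 1) directly by counting independent sets, and then bootstrap to the closed form for $U_{H_n}$ in part 2) through the leading-coefficient polynomial $f_n$.

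For part 1) I would fix an independent set $\Omega$ in $H_n$ and split according to its intersection with the clique $K_{a_n}$, which can contain at most one vertex of $\Omega$. Three cases arise: (i) $\Omega\cap V(K_{a_n})=\varnothing$; (ii) $\Omega$ contains one of the $a_n-d$ \emph{non-marked} vertices of $K_{a_n}$; (iii) $\Omega$ contains one of the $d$ marked vertices, namely the one lying on the bridge to $K_{a_{n-j}}$. In case (i) the remaining vertices form an independent set of the graph obtained by deleting $K_{a_n}$ together with its bridges, which is exactly $H_{n-1}$; this contributes the ``$+1$'' part of the first coefficient, i.e. $1\cdot\widetilde{P}_{H_{n-1}}$ (the $1$ being $x_n\cdot\tfrac1{x_n}$). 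In case (ii) the chosen non-marked vertex is adjacent only within $K_{a_n}$, so the rest of $\Omega$ is free in $H_{n-1}$, and choosing among the $a_n-d$ non-marked vertices gives $x_n(a_n-d)\widetilde{P}_{H_{n-1}}$. In case (iii) the chosen marked vertex forbids its bridge partner in $K_{a_{n-j}}$, so the rest of $\Omega$ lies in $H_{n-1}^{j}$; summing over $j$ gives $x_n\sum_{j=1}^d\widetilde{P}_{H_{n-1}^{j}}$. Collecting the three contributions yields the stated recurrence.

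Taking the coefficient of $x_1\cdots x_n$ on both sides of part 1) gives the multilinear recurrence
$$f_n(a_1,\dots,a_n)=(a_n-d)\,f_{n-1}(a_1,\dots,a_{n-1})+\sum_{j=1}^d f_{n-1}(a_1,\dots,a_{n-j}-1,\dots,a_{n-1}),$$
and since $f_n$ has degree $1$ in each $a_i$, this holds as a polynomial identity, so arbitrary real arguments may be substituted. I would then establish the auxiliary formula
$$\widetilde{P}_{H_n}(\mathbf x)=x_1\cdots x_n\cdot f_n\bigl(a_1+\tfrac1{x_1},\dots,a_n+\tfrac1{x_n}\bigr)$$
by induction on $n$: substituting $a_i:=a_i+\tfrac1{x_i}$ in the $f_n$-recurrence and multiplying through by $x_1\cdots x_n$, the first coefficient becomes $x_n(a_n-d+\tfrac1{x_n})$ and the shifted arguments become $(a_{n-j}-1)+\tfrac1{x_{n-j}}$, so the right-hand side turns exactly into $x_n(a_n-d+\tfrac1{x_n})\widetilde{P}_{H_{n-1}}+x_n\sum_{j}\widetilde{P}_{H_{n-1}^{j}}$, which equals $\widetilde{P}_{H_n}$ by part 1). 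The base case $n=1$ (a single clique $K_{a_1}$) is immediate, since $\widetilde{P}_{H_1}=1+a_1x_1=x_1(a_1+\tfrac1{x_1})$ and $f_1(a_1)=a_1$. Finally, plugging $x_j:=-x$ into the auxiliary formula, and noting that $U_{H_n}(x)$ is precisely $\widetilde{P}_{H_n}$ with every variable set to $-x$, gives part 2).

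The main obstacle, as in the first-kind case, is bookkeeping rather than depth: I must verify that deleting $K_{a_n}$ (with or without one bridge endpoint of $K_{a_{n-j}}$) genuinely produces a clique extension of the second kind of $H\setminus v_n$ with the advertised parameters, so that the induction hypothesis applies to $\widetilde{P}_{H_{n-1}}$ and $\widetilde{P}_{H_{n-1}^{j}}$. The key observations are that a former bridge endpoint, once its bridge is removed, behaves for independent-set counting exactly like a non-marked clique vertex, and that the reduction $a_{n-j}\mapsto a_{n-j}-1$ preserves $a_{n-j}-1\ge\deg v_{n-j}-1$, so no degeneracy arises; extending $f_n$ to a bona fide multilinear polynomial is what legitimizes the substitution $a_i\mapsto a_i+\tfrac1{x_i}$.
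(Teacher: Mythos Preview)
Your proposal is correct and follows essentially the same approach the paper intends: the paper simply states that the proof is analogous to those of Theorems~\ref{thm:cl-ext-1} and~\ref{thm:indpol}, and your argument carries out precisely that analogy --- the three-case count according to $\Omega\cap V(K_{a_n})$ for part~1), then the passage through $f_n$ and the auxiliary product formula for part~2).
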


The proof is similar to the proofs of theorem \ref{thm:cl-ext-1} and \ref{thm:indpol}.

\paragraph{Example 1}
Consider the clique extension of the first kind of path $G=P_n$: 
$$
G_n=G(k+1, k+1, \dots, k+1),
$$
Let
\begin{equation}\label{eqn:An-def}
A_n=A_n(k)=f_n(k+1,k+1,\dots,k+1), \qquad A_0=1.
\end{equation}
The recurrent relation \eqref{eqn:fG-recurrence} has a form
\begin{equation}\label{eqn:A-recurrence}
A_n=k(A_{n-1}+A_{n-2}), \qquad n\ge 2.
\end{equation}
Then $A_n$ are polynomials of $k$.

\begin{lemma}\label{thm:roots-An}
For every $n$ all real roots of polynomial $A_n(x)$ belong to $[-4,0]$.
\end{lemma}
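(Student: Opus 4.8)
The plan is to exploit the recurrence $A_n=k(A_{n-1}+A_{n-2})$ from \eqref{eqn:A-recurrence} directly, treating $A_n(x)$ as a polynomial in $k=x$. Writing $x=-t$ with $t\ge 0$, I want to show that all real roots lie in $t\in[0,4]$, equivalently that $A_n(x)$ has no real root in $(0,\infty)$ and no real root in $(-\infty,-4)$. The first claim is the easy half: for $k=x>0$ the recurrence has strictly positive coefficients (with $A_0=1$, $A_1=k$), so $A_n(x)>0$ for all $x\ge 0$, and there are simply no positive roots. The substance of the lemma is therefore confined to the interval $(-4,0)$, i.e.\ I must rule out roots with $x<-4$.

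For the negative side I would substitute $x=-4\cosh^2\theta$ or, more cleanly, parametrize the putative common growth rate. The recurrence $A_n=k A_{n-1}+k A_{n-2}$ has characteristic equation $\lambda^2-k\lambda-k=0$ with roots $\lambda_{\pm}=\tfrac{k\pm\sqrt{k^2+4k}}{2}$. For $k<-4$ we have $k^2+4k=k(k+4)>0$, so both roots are real and distinct, and moreover both are \emph{negative} (their product is $\lambda_+\lambda_-=-k>0$ and their sum is $k<0$). Writing $A_n=\alpha\lambda_+^n+\beta\lambda_-^n$ and pinning down $\alpha,\beta$ from $A_0=1$, $A_1=k$, I would show that for $k<-4$ the closed form never vanishes. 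The key mechanism is that $\lambda_+$ and $\lambda_-$, being distinct negative reals, satisfy $|\lambda_+|\neq|\lambda_-|$, and a standard argument (the dominant term eventually controls the sign, combined with checking that the coefficient of the dominant root is nonzero and the initial terms are nonzero) shows $A_n\neq 0$.

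A cleaner route that avoids case analysis on the constants is to prove directly, by induction on $n$, that $A_n(x)$ and $A_{n-1}(x)$ are both nonzero and of a predictable sign pattern on $(-\infty,-4)$. Concretely, I expect to establish by induction that for $x<-4$ the ratio $A_n/A_{n-1}$ stays within an interval bounded away from the ``bad'' value that would force the next term to change sign prematurely; since the map $r\mapsto x(1+1/r)$ governing $A_n/A_{n-1}=x(1+A_{n-2}/A_{n-1})$ has its relevant fixed points exactly at $\lambda_{\pm}$, and these are real and of constant sign for $x<-4$, the ratio is trapped in $(\lambda_-,\lambda_+)$ (or the appropriate sign-definite interval) and never passes through $0$. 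This is essentially the Sturm-sequence/continued-fraction viewpoint on a three-term recurrence.

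I expect the main obstacle to be the boundary behavior near $x=-4$, where the characteristic roots coalesce ($\lambda_+=\lambda_-=-2$) and the closed form degenerates to the form $A_n=(\alpha+\beta n)(-2)^n$; the lemma only claims the roots lie in the \emph{closed} interval $[-4,0]$, so I must confirm that $x=-4$ is either genuinely a root for some $n$ or at least not a point through which a root escapes to the left. Handling this degenerate case, and making the sign-trapping induction airtight precisely at the endpoint, is where the care is needed; everywhere in the open interval $(-4,0)$ the argument is softer because it only requires knowing roots \emph{can} occur there (which the lemma permits), not excluding them.
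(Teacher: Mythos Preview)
Your overall strategy---solve the linear recurrence explicitly and show the closed form cannot vanish when $k<-4$---is exactly what the paper does. Two points deserve correction.

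First, a minor slip: from $A_n=f_n(k+1,\dots,k+1)$ and the definition of $f_1$ one gets $A_1=k+1$, not $A_1=k$. This does not damage your positivity argument for $k>0$, but it does change the constants $\alpha,\beta$ in the closed form, so be careful when you actually compute.

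Second, and more substantively, your ``standard argument'' that the dominant root eventually controls the sign, together with nonvanishing of initial terms, is \emph{not} enough to conclude $A_n\neq 0$ for \emph{every} $n$: dominance is asymptotic and ``initial terms'' is finite, so the middle range is uncovered. The paper closes this gap cleanly. Working with the reciprocal roots $t_i=1/\lambda_i$, i.e.\ the roots of $t^2+t-1/k_0=0$, one has for $k_0<-4$ that $t_1\in(-\tfrac12,0)$ and $t_2\in(-1,-\tfrac12)$. Plugging the initial data into the closed form and setting $A_n(k_0)=0$ yields the single equation
\[
\frac{1+t_1}{1+t_2}=\Bigl(\frac{t_1}{t_2}\Bigr)^{n+1},
\]
and one checks directly that the left side exceeds $1$ while the right side lies in $(0,1)$, a contradiction valid for every $n\ge 1$. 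This replaces your vaguer appeal to eventual dominance with a one-line inequality that handles all $n$ at once. Your alternative ratio-trapping induction would also work, but it is more laborious than this.

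Finally, your worry about the boundary $x=-4$ is misplaced: the lemma only asks you to exclude roots in $(-\infty,-4)\cup(0,\infty)$, and the argument above treats each such $k_0$ individually, with no continuity-in-$k$ reasoning through which a root could ``escape''.
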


\begin{proof}
It is clear that $A_n$ has no positive roots. Let real $k_0<-4$ be a root of $A_n$. 
Solving the linear recurrence \eqref{eqn:A-recurrence} we obtain

$$
A_n(k_0)=\frac{1+t_1}{k_0t_1^{n+1}(t_1-t_2)} + \frac{1+t_2}{k_0t_2^{n+1}(t_2-t_1)}, 
$$%
where $t_1$ and $t_2$ are the roots of its characteristic polynomial:
$$
t_1=-\frac12+\frac12\sqrt{1+\frac4{k_0}} \quad\text{and}\quad
t_2=-\frac12-\frac12\sqrt{1+\frac4{k_0}}.
$$
Equating this expression to 0 we obtain
$$
\frac{1+t_1}{1+t_2} = \biggl(\frac{t_1}{t_2}\biggr)^{n+1}.
$$
But it is impossible because the left hand side is greater than 1, and the right hand side is less than 1.
\end{proof}

\paragraph{Example 2} 
Consider one more clique extension of the first kind for $G=P_n$
\begin{equation}\label{eqn:primer2}
G_n=G(k+3, \underbrace{k+1, k+1,  \dots, k+1,}_{n-2} k+3).
\end{equation}
Let us find monovariate signed independence polynomial $U_{G_n}$ for this graph.

Let for $n\geq 2$
\begin{align*}
L_n&=L_n(k)=f_n(k+3, \underbrace{k+1, k+1,\dots, k+1,}_{n-2} k+3),
\\
B_n&=f_n(\,\underbrace{k+1,k+1,\dots, k+1,}_{n-1} k+3)=f_n(k+3,\underbrace{k+1,k+1,\dots, k+1}_{n-1} \,).
\end{align*}
The following recurrence relations for sequences $B_n$, $L_n$ and $A_n$ (from Example 1, see~\eqref{eqn:An-def}) hold for $n\geq 3$ similarly to \eqref{eqn:A-recurrence} 
$$
B_n=(k+2)A_{n-1}+kA_{n-2}, \qquad L_n=(k+2)B_{n-1}+kB_{n-2}.
$$
Applying \eqref{eqn:A-recurrence} for simplification we obtain for $n\geq 4$
\begin{align*}
L_n&=(k+2)\Bigl( (k+2)A_{n-2}+kA_{n-3} \Bigr) +k\Bigl( (k+2)A_{n-3}+kA_{n-4} \Bigr)
=\\&=
(k+2)\Bigl( A_{n-1}+2A_{n-2} \Bigr) +k\Bigl( A_{n-2}+2A_{n-3} \Bigr)
=\\&=
A_n+4A_{n-1}+4A_{n-2}
=\\&=
A_n\cdot\Bigl(1+\frac4k\Bigr).
\end{align*}
Though we see $k$ in the denominator, the right hand side is a polynomial on $k$ (one can prove that polynomial $A_n$ is divisible by $k^{\lfloor\frac n2\rfloor}$). Observe also that for $n=2$ and $n=3$ the formula is true.

By corollary \ref{cor-UGn} we conclude that
\begin{equation}\label{eqn:L-ind-pol}
U_{G_n}(x)=(-x)^n  f_n\bigl(k+1-\tfrac 1x, k+1-\tfrac 1x, \dots, k+1-\tfrac 1x\bigr)\cdot
\Bigl(1+\frac4{k-\frac 1x}\Bigr).
\end{equation}

\begin{theorem}\label{thm:minroot-Gn}
All real roots of polynomial $U_{G_n}(x)$ are positive and the minimal positive root equals $1 / (k+4)$.
\end{theorem}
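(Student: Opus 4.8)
The plan is to reduce everything to the root location of $A_n$ that is already furnished by Lemma~\ref{thm:roots-An}, using the change of variable $m=k-\tfrac1x$ that is implicit in formula~\eqref{eqn:L-ind-pol}. First I would rewrite \eqref{eqn:L-ind-pol} compactly. Since $A_n(k)=f_n(k+1,\dots,k+1)$, substituting $k\mapsto k-\tfrac1x$ gives $f_n\bigl(k+1-\tfrac1x,\dots,k+1-\tfrac1x\bigr)=A_n\bigl(k-\tfrac1x\bigr)$, so that
$$
U_{G_n}(x)=(-x)^n A_n\bigl(k-\tfrac1x\bigr)\Bigl(1+\tfrac{4}{k-\frac1x}\Bigr)=(-x)^n\,L_n\bigl(k-\tfrac1x\bigr),
\qquad L_n(m)=\frac{A_n(m)\,(m+4)}{m}.
$$
The apparent pole of $L_n$ at $m=0$ is removable, since (as remarked after \eqref{eqn:L-ind-pol}) $A_n$ is divisible by $m^{\lfloor n/2\rfloor}$; hence $L_n$ is a genuine polynomial, and its real roots are exactly the roots of $A_n$ together with $m=-4$.

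Next I would separate off $x=0$ and set up the bijection. Because $U_{G_n}(0)=P_{G_n}(0)=1\neq0$, the value $x=0$ is not a root, so I may restrict to $x\neq0$. On this domain the map $m=k-\tfrac1x$, equivalently $x=\tfrac1{k-m}$, is a bijection onto $\{m\in\RR : m\neq k\}$, and since $(-x)^n\neq0$ for $x\neq0$, one has $U_{G_n}(x)=0$ if and only if $L_n(m)=0$. Thus it suffices to control the real roots of $L_n$.

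Then I would invoke Lemma~\ref{thm:roots-An}: all real roots of $A_n$ lie in $[-4,0]$. Combining this with the factorization of $L_n$, every real root $m$ of $L_n$ satisfies $m\in[-4,0]$. In particular $m\le0<k$ (recall $k$ is a positive integer), so $m\neq k$ and $k-m\ge k>0$; hence each corresponding $x=\tfrac1{k-m}$ is \emph{positive}, proving the first assertion. For the minimal root I would note that $x=\tfrac1{k-m}$ is strictly decreasing in $m$ on this range, so the smallest positive root corresponds to the smallest admissible $m$. The explicit factor $(m+4)$ forces $L_n(-4)=0$, and no real root of $L_n$ is less than $-4$, so the minimal root is $m=-4$, giving $x=\tfrac1{k-(-4)}=\tfrac1{k+4}$.

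The step I expect to be the main obstacle is the bookkeeping that makes the reduction rigorous: confirming that $L_n$ is honestly a polynomial (removability of the pole at $m=0$, from the divisibility $k^{\lfloor n/2\rfloor}\mid A_n$), that $m=-4$ is a genuine root of $L_n$ rather than an artifact cancelled against $A_n$, and that the hypothesis $k>0$ keeps the entire image of the bijection inside $(0,\infty)$. Once these points are pinned down, the location of the roots of $U_{G_n}$ follows directly from Lemma~\ref{thm:roots-An} with no further computation.
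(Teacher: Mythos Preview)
Your proof is correct and follows essentially the same route as the paper's: both exploit the factorization~\eqref{eqn:L-ind-pol}, read off the root $x=1/(k+4)$ from the factor $1+\tfrac{4}{k-1/x}$, and invoke Lemma~\ref{thm:roots-An} to confine the remaining roots (those coming from the $A_n$ factor) to $-4\le k-\tfrac1x\le 0$, which forces $x>0$ and $x\ge 1/(k+4)$. Your write-up is more explicit about the bookkeeping (removability at $m=0$, the bijection $m=k-1/x$, monotonicity), while the paper compresses this into two lines, but the underlying argument is identical.
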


\begin{proof}
The number $x=1 / (k+4)$ is the root of the last multiplier in \eqref{eqn:L-ind-pol}.
The other real roots are the roots of the first multiplier $f_n$ and by lemma~\ref{thm:roots-An}
they satisfy the inequality  $-4\leq k - 1 / x\leq 0$. It is clear that $x>0$ here and then 
$x\geq 1 / (k+4)$.
\end{proof}

\paragraph{Examlpe 3} Consider the clique extension of the second kind for graph $H=P_n$ (fig.~\ref{fig:clique-chain-ex3})
$$
H_n=H(k+1, \underbrace{k, k,  \dots, k,}_{n-2}  k+1).
$$

Let
\begin{align*}
E_n&=E_n(k)=f_n(k+1, \underbrace{k, k,\dots, k,}_{n-2} k+1),
\\
\Phi_n&=\Phi_n(k)=f_n(\underbrace{k, k,\dots, k}_{n}).
\end{align*}
Polynomials $\Phi_n(k)$ satisfy the recurrence
$$
\Phi_n=k \Phi_{n-1}-\Phi_{n-2}.
$$
Now one can check similarly to theorem \ref{thm:roots-An} that for every $n$ the real roots of $\Phi_n(k)$ belong to  $[-2; 2]$. 

Since $\Phi_n$ and $E_n$ satisfy the relation $E_n=(k+2)\Phi_{n-1}$, we obtain the following theorem.

\begin{theorem}\label{thm:minroot-for-Hn}
All real roots of polynomial $U_{H_n}(x)$ are positive and the minimal positive root equals $1 / (k+2)$.
\end{theorem}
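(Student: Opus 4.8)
The plan is to follow the same route as the proof of Theorem~\ref{thm:minroot-Gn}: factor $U_{H_n}$ into a linear piece that produces the extremal root and a Chebyshev-type piece whose roots are controlled by the stated bound on $\Phi_{n-1}$. First I would invoke part~2) of the theorem on clique extensions of the second kind to write
$$
U_{H_n}(x)=(-x)^n\, f_n\bigl(k+1-\tfrac1x,\ k-\tfrac1x,\ \dots,\ k-\tfrac1x,\ k+1-\tfrac1x\bigr).
$$
By the definition of $E_n$, the value of $f_n$ whose first and last arguments equal $\kappa+1$ and whose remaining $n-2$ arguments equal $\kappa$ is precisely $E_n(\kappa)$; taking $\kappa=k-\tfrac1x$ gives $U_{H_n}(x)=(-x)^n E_n\bigl(k-\tfrac1x\bigr)$.

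Next I would substitute the factorization $E_n=(k+2)\Phi_{n-1}$ (read as $E_n(\kappa)=(\kappa+2)\Phi_{n-1}(\kappa)$) to obtain
$$
U_{H_n}(x)=(-x)^n\Bigl(k+2-\tfrac1x\Bigr)\,\Phi_{n-1}\bigl(k-\tfrac1x\bigr).
$$
The linear factor $k+2-\tfrac1x$ vanishes exactly at $x=1/(k+2)$. Every other real root $x$ satisfies $k-\tfrac1x=\rho$ for some real root $\rho$ of $\Phi_{n-1}$; since all such $\rho$ lie in $[-2,2]$, we get $\tfrac1x=k-\rho\in[k-2,k+2]$, hence $x=1/(k-\rho)\ge 1/(k+2)$, while $x>0$ as soon as $k>2$ forces $k-\rho>0$. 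Thus all real roots are positive and none is smaller than the root $1/(k+2)$ contributed by the linear factor, which is the assertion of the theorem.

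The only genuinely non-routine ingredient is the spectral bound that the real roots of $\Phi_{n-1}$ lie in $[-2,2]$, which is to be proved exactly as Lemma~\ref{thm:roots-An}: solve the constant-coefficient recurrence $\Phi_n=k\Phi_{n-1}-\Phi_{n-2}$ through the roots $t_1,t_2$ of its characteristic equation, and show that a root $k_0$ of $\Phi_{n-1}$ outside $[-2,2]$ would make $\lvert t_1\rvert\ne\lvert t_2\rvert$ and thereby force an impossible identity equating a quantity of modulus greater than $1$ with one of modulus less than $1$. Everything after the factorization is the elementary bookkeeping of comparing $1/(k-\rho)$ with $1/(k+2)$, which is immediate from $\rho\ge -2$, so I expect the root bound on $\Phi_{n-1}$ to be the one step requiring actual work.
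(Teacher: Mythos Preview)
Your proposal is correct and follows essentially the same route the paper (implicitly) takes: it sets up $E_n=(k+2)\Phi_{n-1}$ and the bound on the roots of $\Phi_n$ precisely so that the proof of Theorem~\ref{thm:minroot-for-Hn} runs word for word like that of Theorem~\ref{thm:minroot-Gn}, and you have reproduced that argument accurately. One small remark: your positivity step ``$x>0$ as soon as $k>2$'' leaves the boundary case $k=2$ dangling; since the $\Phi_{n-1}$ are (rescaled) Chebyshev polynomials of the second kind, their real roots lie strictly in $(-2,2)$, so $k-\rho>0$ holds for $k\ge 2$ as well and the argument goes through without modification.
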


\section{Hat guessing number and diameter}

In this section we prove that diameter and hat guessing number are independent parameters of a graph. 

Let $k>1$, $\ell$ and $n$ be positive integers. Consider graphs
$H_n^\ell$, $\widetilde H_n^\ell$ and $H_n^{\ell-}$ depicted in fig.~\ref{fig:clique-chains}.
$H_n^\ell$ and $\widetilde H_n^\ell$ are clique extensions of the second kind of path $P_n$ in terms of section~\ref{sec:IPofCE}. 
Each rhombus in the figure is a clique, which size is written in its center. The cliques are connected by bridges without common vertices.  One edge in the clique $K_{\ell-2}^{-}$ is removed.

 \begin{figure}[tb]    
  \centering\footnotesize
  \begin{tikzpicture}[scale=.7]
    \foreach \shift/\size in {0/1, 3/2, 6/2, 12/2, 15/1} {
      \begin{scope}[shift={(\shift, 0)}]
        \draw (-1, 0) -- (0, 1) -- (1, 0) -- (0, -1) -- cycle;
        \draw node {$K_{\ell - \size}$};
      \end{scope}
    }
    \draw (9, 0) node {$\cdots$};
    \foreach \shift in {1, 4, 7, 10, 13} {
      \begin{scope}[shift={(\shift, 0)}]
        \draw (0, 0) -- (1, 0);
      \end{scope}
    }
  \end{tikzpicture}

Graph $H_{n}^\ell$ is a clique extension of the second kind of path $P_n$.

\bigskip
\begin{tikzpicture}[scale=.7]
    \foreach \shift/\size in {0/1, 3/2, 6/2, 12/2, 15/2}{
      \begin{scope}[shift={(\shift, 0)}]
        \draw (-1, 0) -- (0, 1) -- (1, 0) -- (0, -1) -- cycle;
        \draw node {$K_{\ell - \size}$};
      \end{scope}
    }
    \draw (9, 0) node {$\cdots$};
    \foreach \shift in {1, 4, 7, 10, 13} {
      \begin{scope}[shift={(\shift, 0)}]
        \draw (0, 0) -- (1, 0);
      \end{scope}
    }
  \end{tikzpicture}

Graph $\widetilde H_n^\ell$. The size of the rightmost clique is decreased.

\bigskip
\begin{tikzpicture}[scale=.7]
    \foreach \shift/\size in {0/{{\ell-1}}, 6/{\ell-2}^{-}, 12/{{\ell-2}}, 15/{\ell-1} }{
      \begin{scope}[shift={(\shift, 0)}]
        \draw (-1, 0) -- (0, 1) -- (1, 0) -- (0, -1) -- cycle;
        \draw node {$K_\size$};
      \end{scope}
    }
    \draw (9, 0) node {$\cdots$};
    \draw (3, 0) node {$\cdots$};
    \foreach \shift in {1, 4, 7, 10, 13} {
      \begin{scope}[shift={(\shift, 0)}]
        \draw (0, 0) -- (1, 0);
      \end{scope}
    }
  \end{tikzpicture}

Graph $H_n^{\ell-}$. One edge in the clique $K_{\ell-2}^{-}$ is removed.

\medskip
\caption{Graphs $H_n^\ell$, $\widetilde H_n^\ell$ and $H_n^{\ell-}$. Each rhombus is a clique, which size is written in its center. The cliques are connected by bridges without common vertices.}
\label{fig:clique-chains}
\end{figure}
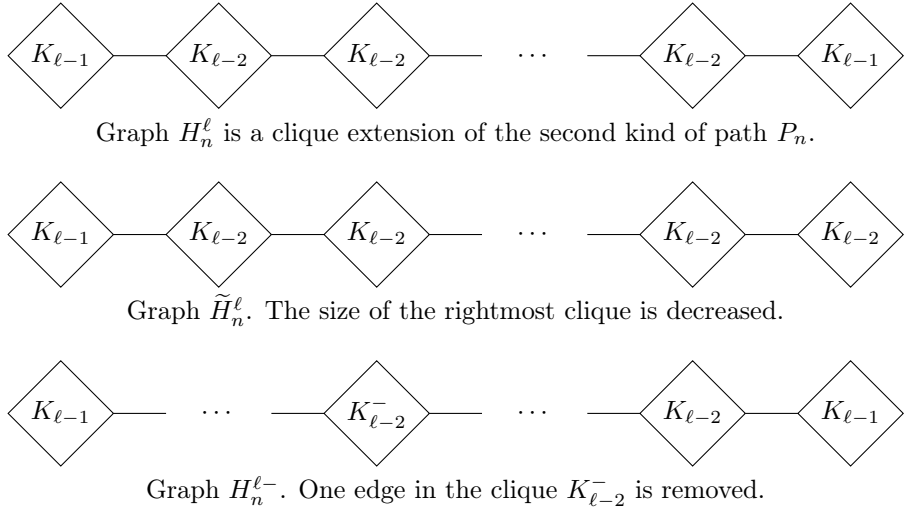

\begin{lemma} \label{lem:stegosaur-games}

$1)$ %
$\hgn(H_n^{2k})=2k$ and the game $\langle {H_n^{2k}, \cnst{2k}} \rangle$ is maximal winning.

$2)$ The game $\langle {H_n^{2k+1}, \cnst{(2k+1)}} \rangle$ is losing.

$3)$ The games $\langle {\widetilde H_n^{2k}, \cnst{2k}} \rangle$ and $\langle {\widetilde H_n^{2k-}, \cnst{2k}} \rangle$ are losing.
\end{lemma}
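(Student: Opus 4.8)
The plan is to treat the three parts separately and to make everything rest on the maximality of the even game of part~1.

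For part~1 I would realize $\langle H_n^{2k}, \cnst{2k}\rangle$ as a product (an iterated $(\{A\},A)$-sum) of precise winning games on complete graphs, and then quote Theorem~\ref{thm:product-maximality} and Corollary~\ref{cor:hgn=mu}. Concretely, assign to each non-marked vertex of a clique the hatness $2k$, to each marked (bridge) vertex the hatness $k$, and make every bridge a copy of $\langle K_2, \cnst{2}\rangle$. Then each end clique $K_{2k-1}$ (one marked vertex) satisfies $\tfrac1k+\tfrac{2k-2}{2k}=1$ and each inner clique $K_{2k-2}$ (two marked vertices) satisfies $\tfrac2k+\tfrac{2k-4}{2k}=1$, so all pieces are precise winning by Theorem~\ref{thm:clique-win}; gluing multiplies the hatness at every shared vertex to $k\cdot2=2k$, reproducing the uniform game. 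By induction on the number of pieces the sum is maximal (Theorem~\ref{thm:product-maximality}) and winning (Theorem~\ref{thm:sum-of games}), whence $\hgn(H_n^{2k})=2k$ by Corollary~\ref{cor:hgn=mu}.

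For part~3 I would observe that both graphs arise from the \emph{maximal} game of part~1 by a single deletion, and then exploit the strict positivity $Z_{H_n^{2k}}(\mathbf x)>0$ for $\mathbf 0\le\mathbf x\lneqq\mathbf r$ (statement 2) of maximality), where $\mathbf r=\cnst{1/2k}$. Indeed $\widetilde H_n^{2k}$ is $H_n^{2k}$ with one non-marked vertex $w$ of the last clique removed, and $Z_{\widetilde H_n^{2k}}(\mathbf r)$ is precisely $Z_{H_n^{2k}}$ evaluated at $\mathbf r$ with $x_w:=0$, hence positive, so the game is losing by the criterion $Z_G(\mathbf r)>0$. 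For the edge-deleted graph I would use the edge recurrence $Z_{G-uv}=Z_G+x_ux_vZ_{G\setminus(N^+(u)\cup N^+(v))}$ obtained from \eqref{eqn:indpolrec}: at $\mathbf r$ the first term vanishes by maximality and the second is positive (again by statement 2), so $Z_{G-uv}(\mathbf r)>0$ and the game is losing.

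Part~2 is where the real difficulty lies. Here $\hat\mu(H_n^{2k+1})=2k+1$ (Theorem~\ref{thm:minroot-for-Hn} with $k\mapsto 2k-1$), so the natural test is inconclusive: a direct computation gives $Z_{H_n^{2k+1}}(\mathbf r)=U_{H_n^{2k+1}}(1/(2k+1))=0$, and in fact the game is maximal, so neither the losing criterion $Z_G(\mathbf r)>0$ nor the perfect-strategy obstruction (positivity of $Z$ strictly below $\mathbf r$) can detect losing --- already the smallest instance $H_2^3=P_4$ is a maximal game with $\hgn=2<3=\hat\mu$. Thus the obstruction is a pure integrality gap of the single-guess uniform game, and I would prove losing by a direct adversary argument, by induction along the chain. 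The point is that each inner clique $K_{2k-1}$ and each end clique $K_{2k}$ is deficient by exactly one reciprocal unit ($\tfrac{2k-1}{2k+1},\tfrac{2k}{2k+1}<1$), and the only information a clique transmits to its neighbour passes through a single bridge vertex; I would track, clique by clique, the set of colours of the current bridge vertex that remain uncovered and show that oddness of $2k+1$ forces this set to stay nonempty all the way to the far end, yielding a colouring on which every sage errs. The main obstacle is exactly this step: the losing constructors (Theorems~\ref{thm:sum-lose} and~\ref{thm:2leaf-attach-lose}) do \emph{not} apply, since cutting the chain at any bridge vertex leaves a clique that is itself winning once its glue vertex is given hatness $2$, so the induction must be performed by hand at the level of colourings rather than through the constructor machinery.
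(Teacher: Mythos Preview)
Parts~1 and~3 are fine. Part~1 is exactly the paper's argument. In part~3 your treatment of $\widetilde H_n^{2k}$ matches the paper's (losing by maximality), while for $H_n^{2k-}$ your edge-recurrence computation $Z_{G-uv}=Z_G+x_ux_vZ_{G\setminus(N^+(u)\cup N^+(v))}$ is a clean alternative to the paper's route, which instead builds explicit losing pieces containing the defective clique $K_{2k-2}^{-}$ and glues them with Theorem~\ref{thm:sum-lose}. Both arguments are valid; yours is shorter.

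The real gap is in part~2. You correctly observe that the polynomial criterion is inconclusive (indeed $Z_{H_n^{2k+1}}(\cnst{1/(2k+1)})=0$), but your dismissal of the losing constructors is wrong, and the paper's proof runs precisely through them. The point you miss is that Theorems~\ref{thm:sum-lose} and~\ref{thm:2leaf-attach-lose} are meant to be used \emph{together}. Start from $\langle K_{2k-1},h\rangle$ with $h(A)=k+1$ and $h\equiv 2k+1$ elsewhere; this is losing by Theorem~\ref{thm:clique-win} since $\tfrac1{k+1}+\tfrac{2k-2}{2k+1}<1$. Theorem~\ref{thm:2leaf-attach-lose} now attaches a pendant vertex of hatness~$2$ to $A$ while raising $h(A)$ to $2(k+1)-1=2k+1$, yielding a losing ``clique plus leaf'' with uniform hatness $2k+1$ except at the leaf. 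The same works with $K_{2k}$ for the end pieces. Theorem~\ref{thm:sum-lose} then glues these blocks leaf-to-clique along the chain (the leaf providing the required $h_2(A)=2$), and one obtains that $\langle H_n^{2k+1},\cnst{(2k+1)}\rangle$ is losing. Your objection---that cutting the chain at a bridge vertex and assigning hatness~$2$ there leaves a \emph{winning} clique---is an objection only to the naive decomposition; the leaf-attachment theorem manufactures the hatness-$2$ vertex as a genuine pendant, sidestepping this. So no ad hoc adversary induction is needed, and your sketch of one is both unnecessary and, as written, not a proof.
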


\begin{proof}
1) Consider the following three games.

\begin{equation}\label{eqn:bricks-for-cliquechain}
  \begin{tikzpicture}[scale=.7,baseline]\footnotesize
    \begin{scope}
      \draw (-1, 0) -- (0, 1) -- (1, 0) -- (0, -1) -- cycle;
      \draw (0, 0) node {$K_{2k - 1}$};
      \filldraw[black] (1, 0) circle (2pt) node[above right] {$k$};
    \end{scope}
    \begin{scope}[shift={(3.5, 0)}]
      \draw (0, 0) -- (1, 0);
      \filldraw[black] (0, 0) circle (2pt) node [above] {$2$};
      \filldraw[black] (1, 0) circle (2pt) node [above] {$2$};
    \end{scope}
    \begin{scope}[shift={(8, 0)}]
      \draw (-1, 0) -- (0, 1) -- (1, 0) -- (0, -1) -- cycle;
      \draw (0, 0) node {$K_{2k - 2}$};
      \filldraw[black] (1, 0) circle (2pt) node[above right] {$k$};
      \filldraw[black] (-1, 0) circle (2pt) node[above left] {$k$};
    \end{scope}
  \end{tikzpicture}
\end{equation}

\noindent
Hatnesses are written near bold vertices. Hatnesses of the other vertices are equal $2k$. These games are precise winning by theorem \ref{thm:clique-win}. It is clear that removing of any edge make them losing. Graph $H_n^{2k}$ is obtained from games~\eqref{eqn:bricks-for-cliquechain} by corollary~\ref{thm:multiplication} on game product in the following way.

\begin{equation}\label{eqn-pic-chainproduct}
  \begin{tikzpicture}[scale=.7]\footnotesize
    \begin{scope}
      \draw (-1, 0) -- (0, 1) -- (1, 0) -- (0, -1) -- cycle;
      \draw (0, 0) node {$K_{2k - 1}$};
      \filldraw[black] (1, 0) circle (2pt) node[above] {$\;\;k$};
    \end{scope}

    \foreach \shift in {1.6, 3.7, 6.9, 8.4, 11.6, 13.7} {
      \draw (\shift, 0) node {$\times$};
    }

    \foreach \shift in {2.1, 12.1} {
      \begin{scope}[shift={(\shift, 0)}]
        \draw (0, 0) -- (1, 0);
        \filldraw[black] (0, 0) circle (2pt) node [above] {$2$};
        \filldraw[black] (1, 0) circle (2pt) node [above] {$2$};
      \end{scope}
    }
    \foreach \shift in {5.3, 10.} {
      \begin{scope}[shift={(\shift, 0)}]
        \draw (-1, 0) -- (0, 1) -- (1, 0) -- (0, -1) -- cycle;
        \draw (0, 0) node {$K_{2k - 2}$};
        \filldraw[black] (1, 0) circle (2pt) node[above] {$\;\;k$};
        \filldraw[black] (-1, 0) circle (2pt) node[above] {$k\;\;$};
      \end{scope}
    }

    \begin{scope}[shift={(15.2, 0)}]
      \draw (-1, 0) -- (0, 1) -- (1, 0) -- (0, -1) -- cycle;
      \draw (0, 0) node {$K_{2k - 1}$};
      \filldraw[black] (-1, 0) circle (2pt) node[above] {$k\;\;$};
    \end{scope}

    \draw (7.6, 0) node {$\ldots$};
  \end{tikzpicture}
\end{equation}

\noindent
We glue pairs of vertices near each ``$\times$'' sign and multiply their hatnesses. By corollary~\ref{cor:hgn=mu} the obtained game $\langle {H_n^{2k}, \cnst{2k}} \rangle$ is maximal winning and  $\hgn(H_n^{2k})=2k$.

2) 
We start form graph $K_{2k}$ in which one vertex is labeled $A$. Define the hat function $h$ as
$h(A)=k+1$ and $h(v)=2k+1$ for $v\ne A$. The game $\langle K_{2k}, h \rangle$ is loosing by theorem \ref{thm:clique-win}, hence we can apply theorem~\ref{thm:2leaf-attach-lose} and obtain the following  loosing game.

\begin{center}  
\begin{tikzpicture}[scale=.7]\footnotesize
    \begin{scope}
      \draw (-1, 0) -- (0, 1) -- (1, 0) -- (0, -1) -- cycle;
      \draw node {$K_{2k}$};
      \filldraw[black] (-1, 0) circle (2pt) node [below] {$k+1\quad\;$};
      \draw (-1, 0) node [above] {$A\;\;$};
    \end{scope}
    \draw (2.5, 0) node {$\longrightarrow$};%
    \begin{scope}[shift = {(7, 0)}]
      \draw (-1, 0) -- (0, 1) -- (1, 0) -- (0, -1) -- cycle;
      \draw node {$K_{2k}$};
      \draw (-1, 0) node [above] {$A\;\;$};
      \draw (-3, 0) -- (-1, 0);
      \filldraw[black] (-1, 0) circle (2pt) node [below] {$2k\!+\!1\;\;\quad$};
      \filldraw[black] (-3, 0) circle (2pt) node [above] {2};
    \end{scope}
  \end{tikzpicture}
\end{center}

Now we multiply by theorem \ref{thm:sum-lose} several copies of loosing games that are equal or similar to the obtained game. For each vertex the value of the hatness is either 2 (then it is written near the vertex in the figure) or $2k+1$ (then it is omitted). 
The product operation of theorem \ref{thm:sum-lose} we denote by $\ltimes$.

\begin{center}  
\begin{tikzpicture}[scale=.7]\footnotesize
    \begin{scope}
      \draw (-1, 0) -- (0, 1) -- (1, 0) -- (0, -1) -- cycle;
      \draw node {$K_{2k}$};
      \filldraw[black] (1, 0) circle (2pt); %
    \end{scope}
    \draw (1.5, 0) node {$\ltimes$};%
    \begin{scope}[shift = {(4, 0)}]
      \draw (-1, 0) -- (0, 1) -- (1, 0) -- (0, -1) -- cycle;
      \draw node {$K_{2k - 1}$};
      \draw (-2, 0) -- (-1, 0);
      \filldraw[black] (-1, 0) circle (2pt); %
      \filldraw[black] (1, 0) circle (2pt); 
      \filldraw[black] (-2, 0) circle (2pt) node [above] {2};
    \end{scope}
    \draw (6.5, 0) node {$\ltimes\quad\dots\quad\ltimes$};
    \begin{scope}[shift = {(10, 0)}]
      \draw (-1, 0) -- (0, 1) -- (1, 0) -- (0, -1) -- cycle;
      \draw node {$K_{2k - 1}$};
      \draw (-2, 0) -- (-1, 0);
      \filldraw[black] (-1, 0) circle (2pt); %
      \filldraw[black] (1, 0) circle (2pt); 
      \filldraw[black] (-2, 0) circle (2pt) node [above] {2};
    \end{scope}
    \draw (11.5, 0) node {$\ltimes$};
    \begin{scope}[shift = {(14, 0)}]
      \draw (-1, 0) -- (0, 1) -- (1, 0) -- (0, -1) -- cycle;
      \draw node {$K_{2k}$};
      \draw (-2, 0) -- (-1, 0);
      \filldraw[black] (-1, 0) circle (2pt); %
      \filldraw[black] (1, 0) circle (2pt); 
      \filldraw[black] (-2, 0) circle (2pt) node [above] {2};
    \end{scope}
  \end{tikzpicture}
\end{center}

\noindent
By theorem  \ref{thm:sum-lose} the resulting game $\langle {H_n^{2k+1}, \cnst{(2k+1)}} \rangle$ is losing.

3) The game $\langle {\widetilde H_n^{2k}, \cnst{2k}} \rangle$ is loosing due to maximality of game $\langle {H_n^{2k}, \cnst{2k}} \rangle$. The game $\langle{\widetilde H_n^{-},\cnst{2k}}\rangle$ is loosing by constructions similar to the previous proof. We can check that the following two games are loosing (values of the hatness function that are not indicated in the figure equal $2k$).

\begin{center}\footnotesize
  \begin{tikzpicture}[scale=.7] %
    \begin{scope}
      \draw (-1, 0) -- (0, 1) -- (1, 0) -- (0, -1) -- cycle;
      \draw node {$K_{2k - 2}^{-}$};
      \filldraw[black] (-1, 0) circle (2pt) node [above left] {$k$};
      \filldraw[black] (1, 0) circle (2pt) node [above right] {$k$};
    \end{scope}
    \begin{scope}[shift = {(6, 0)}]
      \draw (-1, 0) -- (0, 1) -- (1, 0) -- (0, -1) -- cycle;
      \draw node {$K_{2k - 2}^{-}$};
      \draw (-3, 0) -- (-1, 0);
      \draw (3, 0) -- (1, 0);
      \filldraw[black] (-1, 0) circle (2pt) node [below] {$2k - 1\qquad$};
      \filldraw[black] (-3, 0) circle (2pt) node [above left] {2};
      \filldraw[black] (1, 0) circle (2pt) node [below] {$\qquad\; 2k - 1$};
      \filldraw[black] (3, 0) circle (2pt) node [above left] {2};
    \end{scope}
  \end{tikzpicture}
\end{center}

\noindent 
The latter game remains loosing if we increase the values of hatness function from $2k-1$ to $2k$. And then by theorem \ref{thm:sum-lose} we multiply the latter game from left and from right by suitable games $\langle {\widetilde H_i^{2k}, \cnst{2k}} \rangle$.
\end{proof} %

\begin{remark}\label{rem:mu(chain)>hgn}
The game $\langle {H_n^{2k+1}, \cnst{(2k+1)}} \rangle$ can be constructed as a product like \eqref{eqn-pic-chainproduct} of winning generalized hat guessing games (values of the hatness function that are not indicated in the figure equal $2k+1$):

\begin{center}
  \begin{tikzpicture}[scale=.7]\footnotesize
    \begin{scope}
      \draw (-1, 0) -- (0, 1) -- (1, 0) -- (0, -1) -- cycle;
      \draw (0, 0) node {$K_{2k}$};
      \filldraw[black] (1, 0) circle (2pt) node[above] {$\quad k+\tfrac12$};
    \end{scope}
    \foreach \shift in {1.6, 3.7, 6.9, 8.4, 11.6, 13.7} {
      \draw (\shift, 0) node {$\times$};
    }

    \foreach \shift in {2.1, 12.1} {
      \begin{scope}[shift={(\shift, 0)}]
        \draw (0, 0) -- (1, 0);
        \filldraw[black] (0, 0) circle (2pt) node [below] {$2$};
        \filldraw[black] (1, 0) circle (2pt) node [below] {$2$};
      \end{scope}
    }
    \foreach \shift in {5.3, 10.} {
      \begin{scope}[shift={(\shift, 0)}]
        \draw (-1, 0) -- (0, 1) -- (1, 0) -- (0, -1) -- cycle;
        \draw (0, 0) node {$K_{2k}$};
        \filldraw[black] (1, 0) circle (2pt) node[above] {$\quad k+\tfrac12$};
        \filldraw[black] (-1, 0) circle (2pt) node[above] {$k+\tfrac12\quad\; $};
      \end{scope}
    }

    \begin{scope}[shift={(15.2, 0)}]
      \draw (-1, 0) -- (0, 1) -- (1, 0) -- (0, -1) -- cycle;
      \draw (0, 0) node {$K_{2k}$};
      \filldraw[black] (-1, 0) circle (2pt) node[above] {$k+\tfrac12\quad\; $};
    \end{scope}

    \draw (7.6, 0) node {$\ldots$};
  \end{tikzpicture}
\end{center}
 
\noindent
Therefore by corollary \ref{cor:hgn=mu-gen} $\hat\mu(H_n^{2k+1})=2k+1>\hgn(H_n^{2k+1})$.
\end{remark}

Now we will show that diameter and hat guessing number are independent parameters of a graph. In the following theorem hat guessing number is even and diameter is odd but these arithmetical restrictions seem to be non crucial.

For any graph $G$ with $\hgn(G)\ge 3$ it is not difficult to construct graph $G'$ such that $G$ is a subgraph of $G'$, the diameter of $G'$ is as large as we wish and $\hgn(G)=\hgn(G')$. One of possible constructions is just subsequent attaching of new leafs, see \cite[theorem 3.8]{kokhas_cliques_I_2021} for details. In order to avoid such constructions we will consider \emph{minimal} graphs: let $h_0=\hgn(G)$, we say that graph $G$ is \emph{minimal} if $\hgn(G_0)<h_0$ for each subgraph $G_0$ of $G$ and the game $\langle G, \cnst{h_0}\rangle$ is maximal.

\begin{theorem}
For any odd $d$ and even $h_0 > 3$ there exists minimal graph $G$ with diameter $d$ 
and  $\hgn(G)=h_0$. 
\end{theorem}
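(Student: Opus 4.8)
The plan is to take for $G$ one of the clique-chains of Lemma~\ref{lem:stegosaur-games}, exploiting that the \emph{diameter} of such a chain is controlled by the number $n$ of cliques, whereas its \emph{hat guessing number} is controlled by the common clique size $2k$; this makes the two parameters independent. Write $h_0=2k$ (so $k\ge2$, since $h_0>3$ is even) and, for odd $d\ge3$, set $n=(d+1)/2\ge2$ and put $G=H_n^{2k}$. Part~1 of Lemma~\ref{lem:stegosaur-games} gives at once that $\langle H_n^{2k},\cnst{2k}\rangle$ is maximal winning and that $\hgn(G)=2k=h_0$; this already supplies two of the three conditions in the definition of a minimal graph, namely the prescribed hat guessing number and the maximality of $\langle G,\cnst{h_0}\rangle$. (The degenerate value $d=1$, which forces a complete graph, is treated separately by $K_{h_0}$.)

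I would next fix the diameter. In $H_n^{2k}$ the cliques $C_1=K_{2k-1}$, $C_2=\dots=C_{n-1}=K_{2k-2}$, $C_n=K_{2k-1}$ are joined in a line by pairwise non-incident bridges. Since $2k-1\ge3$, each end clique has a vertex that is not a bridge endpoint, and since the two bridges at an internal clique use distinct marked vertices, each internal clique is traversed in a single edge. A geodesic between a non-bridge vertex of $C_1$ and a non-bridge vertex of $C_n$ therefore spends one edge inside each of the $n$ cliques and one edge on each of the $n-1$ bridges, so its length is $2n-1$; no two vertices are farther apart, whence $\operatorname{diam}(G)=2n-1=d$. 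In particular the diameter is the prescribed odd number and is independent of $k$.

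The heart of the argument is minimality, and here I would invoke maximality directly rather than dismantling $G$ into bricks. Because $\hgn$ is monotone under subgraph inclusion (a winning strategy survives the deletion of edges or vertices, as the remaining sages may simply ignore the extra sight), every proper subgraph of $G$ is contained in some $G-e$, so it suffices to prove $\langle G-e,\cnst{2k}\rangle$ is losing for each edge $e=uv$. Deleting $e$ restores exactly the independent sets containing both $u$ and $v$, which gives
\begin{equation*}
Z_{G-e}(\mathbf x)=Z_G(\mathbf x)+x_ux_v\,Z_{G\setminus(N^+(u)\cup N^+(v))}(\mathbf x).
\end{equation*}
Evaluating at $\mathbf r=(1/2k)_{v\in V}$ and using $Z_G(\mathbf r)=0$ together with the maximality property (which yields $Z_{G\setminus S}(\mathbf r)>0$ for every nonempty $S$, this being $Z_G$ evaluated at the vector $\lneqq\mathbf r$ obtained by zeroing the coordinates in $S$, here $S=N^+(u)\cup N^+(v)$), we obtain $Z_{G-e}(\mathbf r)=r_ur_v\,Z_{G\setminus(N^+(u)\cup N^+(v))}(\mathbf r)>0$. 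By the losing criterion of Section~3 this forces $\langle G-e,\cnst{2k}\rangle$ to be losing, so $\hgn(G-e)<h_0$, and by monotonicity $\hgn(G_0)<h_0$ for every proper subgraph $G_0$. Hence $G$ is minimal, with $\operatorname{diam}(G)=d$ and $\hgn(G)=h_0$.

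The step I expect to require the most care is not the assembly above but the input on which it rests, namely Lemma~\ref{lem:stegosaur-games}(1): one must genuinely verify that the entire chain game $\langle H_n^{2k},\cnst{2k}\rangle$ is maximal, for only then is the strict positivity $Z_{G\setminus S}(\mathbf r)>0$ available for \emph{all} nonempty $S$, which is precisely what makes the uniform edge-deletion argument work. Within the present proof the remaining delicate points are merely bookkeeping: confirming that the diameter count remains exact in the smallest case $k=2$, where the internal cliques degenerate to single edges $K_2$ whose two vertices are both used by bridges, and disposing of the boundary value $d=1$.
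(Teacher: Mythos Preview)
Your argument is correct, and your handling of minimality differs from---and is cleaner than---the paper's. Both proofs take $G=H_n^{2k}$ and draw $\hgn(G)=2k$ together with maximality of $\langle G,\cnst{2k}\rangle$ from Lemma~\ref{lem:stegosaur-games}(1). For minimality, however, the paper argues case by case via Lemma~\ref{lem:stegosaur-games}(3): deleting an edge is said to produce (up to symmetry) one of the specific graphs $\widetilde H_n^{2k}$ or $H_n^{2k-}$, and the latter is shown losing by a separate explicit construction using Theorems~\ref{thm:2leaf-attach-lose} and~\ref{thm:sum-lose}. Your route bypasses this entirely: the identity $Z_{G-e}(\mathbf x)=Z_G(\mathbf x)+x_ux_v\,Z_{G\setminus(N^+(u)\cup N^+(v))}(\mathbf x)$, combined with $Z_G(\mathbf r)=0$ and $Z_{G\setminus S}(\mathbf r)>0$ for every nonempty $S$ (both consequences of maximality), gives $Z_{G-e}(\mathbf r)>0$ uniformly for every edge $e$, so part~(3) of the lemma is never invoked. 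In effect you prove the general fact that deleting any edge from a maximal game yields a losing game, which is a tidy strengthening. Two minor remarks: your diameter count $2n-1$ is the correct one (the paper writes $d=2n+1$, apparently a slip), and your parenthetical justification of monotonicity is phrased backwards---the point is that a winning strategy on the \emph{subgraph} lifts to the supergraph by ignoring extra sight, not that it survives deletion---though you use the inequality in the right direction.
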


\begin{proof}
 Let $G=H_n^{2k}$ (fig.~\ref{fig:clique-chains}), then $d = 2n + 1$, $h_0 = 2k$.  By lemma~\ref{lem:stegosaur-games}.1) and 3) the game $\langle G, \cnst{h_0}\rangle$ is maximal and $\hgn(H_n^{2k})=2k$.
 
 If we remove one edge from graph $G$, we obtain either $\tilde H_n^{2k}$ or $H_n^{2k-}$.
 By lemma~\ref{lem:stegosaur-games}.3) hat guessing number of both graphs are less than $2k$.
 Therefore $G$ is minimal.
\end{proof}

Fractional hat guessing number and diameter are also independent parameters of a graph.

\begin{theorem}
 For any integers $d\ge 1$ and $h > 3$ there exists graph~$G$ with diameter $d$ and $\hat{\mu}(G)=h$. 
\end{theorem}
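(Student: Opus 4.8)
The plan is to mirror the structure of the preceding theorem on hat guessing number, replacing the parity-constrained even value $2k$ by an arbitrary integer $h>3$ and using the fractional machinery of Corollary~\ref{cor:hgn=mu-gen} in place of Corollary~\ref{cor:hgn=mu}. First I would set $G=H_n^{h}$, the clique extension of the second kind of the path $P_n$ shown in fig.~\ref{fig:clique-chains}, so that its diameter is $d=2n+1$; the extra flexibility we gain by only needing to control $\hat\mu$ rather than $\hgn$ is precisely what lets us drop the evenness requirement that was forced on us in the previous theorem. Since diameter of this family only takes odd values of the form $2n+1$, I would handle even $d$ by a minor modification of the chain (for instance truncating or lengthening one end so that the path has the desired number of bridges, or attaching a single pendant clique), noting as the authors already remark that such arithmetic restrictions are non-crucial.

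Next I would compute $\hat\mu(H_n^{h})$ using the independence-polynomial results from section~\ref{sec:IPofCE}. By Theorem~\ref{thm:minroot-for-Hn}, the clique extension of the second kind of $P_n$ with interior cliques of size $k$ and end cliques of size $k+1$ has all real roots of $U_{H_n}(x)$ positive with minimal positive root equal to $1/(k+2)$; combined with the fact (stated in the excerpt) that for chordal graphs $\hat\mu(G)=1/r$ where $r$ is the smallest positive root of $U_G$, this gives $\hat\mu=k+2$. So to realize $\hat\mu(G)=h$ I would take the interior clique sizes to be $h-2$ and the two end clique sizes to be $h-1$, exactly matching the $H_n^{\ell}$ notation with $\ell=h$. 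This is the key identification: the second-kind extension is chordal (it is built from cliques glued along bridges, hence triangulated), so the chordal formula applies directly.

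Alternatively — and this is probably the cleaner route to write — I would construct $G$ explicitly as a product of precise \emph{generalized} winning games on complete graphs, exactly as in Remark~\ref{rem:mu(chain)>hgn}. There the authors exhibit $H_n^{2k+1}$ as a product of games $\langle K_{2k}, h, g\rangle$ with fractional hatness $k+\tfrac12$ on the bridge vertices, concluding $\hat\mu(H_n^{2k+1})=2k+1$ by Corollary~\ref{cor:hgn=mu-gen}. Generalizing, for target $\hat\mu=h$ I would assign each bridge-endpoint a fractional hatness $h/2$ (with $g=2$, $h$ replaced by the integer clique data so that $h/g$ is the constant $h_0=h$ demanded by Corollary~\ref{cor:hgn=mu-gen}), chain the complete-graph bricks by the product operation $\gtimes{}$, and invoke Corollary~\ref{cor:hgn=mu-gen} to read off $\hat\mu(G)=h_0=h$ directly, without separately analyzing roots.

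The main obstacle I anticipate is verifying that the individual bricks are \emph{precise} generalized winning games and that the ratio $h/g$ is genuinely constant across all vertices of the assembled graph, since Corollary~\ref{cor:hgn=mu-gen} requires both. Concretely, one must check $\sum_{v}g(v)/h(v)=1$ on each complete-graph brick (the winning/preciseness condition from Theorem~\ref{thm:clique-win} and its generalized version), and that after gluing, the fractional hatnesses multiply consistently so that every vertex — including the glued bridge vertices whose hatnesses get multiplied under the sum operation — still satisfies $h(v)/g(v)=h$. Keeping the bookkeeping of which vertices play the role of $S$ versus $v$ in each product, and confirming the preciseness is preserved under $\gplus{S,v}$ (which follows from Theorem~\ref{thm:product-maximality} since maximal precise games compose to maximal precise games), is the one place where care is needed; everything else is a direct appeal to the constructors and the chordality of clique extensions.
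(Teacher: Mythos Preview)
Your handling of the odd-diameter case matches the paper exactly: take $G=H_n^{h}$, invoke Theorem~\ref{thm:minroot-for-Hn} (or equivalently the product decomposition of Remark~\ref{rem:mu(chain)>hgn} together with Corollary~\ref{cor:hgn=mu-gen}) to get $\hat\mu=h$, and read off $d=2n+1$. The paper also adds the trivial case $d=1$ via $K_h$.

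The gap is your treatment of even $d$. You propose to ``truncate or lengthen one end'' or ``attach a single pendant clique'' and declare the arithmetic restriction non-crucial, but you give no argument that $\hat\mu$ survives such a modification. Any such change alters the independence polynomial $U_G$, and you would have to redo the minimal-root computation; likewise, your alternative route via Corollary~\ref{cor:hgn=mu-gen} requires the modified graph to decompose as a sum of \emph{precise} games with constant ratio $h/g$, which an ad hoc pendant clique will not satisfy. The paper does not hand-wave here: it uses a second, independent family, the first-kind clique extension $G_n$ of Example~2 (formula~\eqref{eqn:primer2}), with diameter $d=n+2$, and appeals to Theorem~\ref{thm:minroot-Gn} to get $\hat\mu(G_n)=k+4=h$. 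That family is what removes the parity restriction; the remark that ``there are no arithmetical restrictions in the latter example'' refers to $G_n$ actually covering the missing cases, not to the restrictions being ignorable.
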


\begin{proof} The examples of such graphs are $K_h$ (for $d=1$), $H_n^h$ (for $d=2n+1$, fig.~\ref{fig:clique-chains}),  $G_n$ (for $d=n+2$, $h=k+4$,  see \eqref{eqn:primer2}). By theorem \ref{thm:minroot-for-Hn} $\hat{\mu}(H_n^h)=h$, by theorem \ref{thm:minroot-Gn} $\hat{\mu}(G_n)=h=k+4$ (for $k=0$ the statement is still correct). Observe that there are no arithmetical restrictions in the latter example.
\end{proof}

In fact, the examples in the proof satisfy corollary \ref{cor:hgn=mu-gen}. In this way 
one can show that these graphs are minimal in a sense that fractional hat guessing number of each subgraph is less than $h$.

\section*{Acknowledgements}

The first author is supported by the Ministry of Science and Higher Education of the Russian Federation, agreement 075-15-2019-1620 date 08/11/2019.

\bibliographystyle{elsarticle-harv} 
\bibliography{main}

\end{document}